\documentclass[12pt, a4paper]{amsart}

\usepackage[utf8]{inputenc}
\usepackage[english]{babel}
\usepackage[hmargin=2.3cm,vmargin=2.3cm]{geometry}

\newtheorem{theorem}{Theorem}[section]
\newtheorem{proposition}[theorem]{Proposition}
\newtheorem{lemma}[theorem]{Lemma}

\newtheorem{example}[theorem]{Example}
\newtheorem{remark}[theorem]{Remark}
\theoremstyle{definition}
\newtheorem{definition}[theorem]{Definition}

\makeatletter
\@namedef{subjclassname@2020}{%
  \textup{2020} Mathematics Subject Classification}
\makeatother

\title{Sobolev spaces of vector-valued functions}

\author{Iv{\'a}n Caama\~{n}o}
\address{IC: Departamento de An\'alisis Matem\'atico y Matem\'atica Aplicada \\
Facultad de Ciencias Matem\'aticas \\
Universidad Complutense de Madrid \\ 28040 Madrid, Spain
28040 Madrid, Spain}
\email{ivancaam@ucm.es}

\author{Jes\'us A. Jaramillo}
\address{JAJ: Instituto de Matem\'atica Interdisciplinar (IMI) and Departamento de An\'alisis Matem\'atico y  Matem\'atica Aplicada \\ Facultad de Ciencias
  Matem\'aticas \\ Universidad Complutense de Madrid \\ 28040 Madrid, Spain}
\email{jaramil@mat.ucm.es}

\author{{\'A}ngeles Prieto}
\address{AP: Departamento de An\'alisis Matem\'atico y  Matem\'atica Aplicada \\ Facultad de Ciencias
  Matem\'aticas \\ Universidad Complutense de Madrid \\ 28040 Madrid, Spain}
\email{angelin@mat.ucm.es}

\author{Alberto Ruiz de Alarc{\'o}n}
\address{ARA: Instituto de Ciencias Matem{\'a}ticas (CSIC-UAM-UC3M-UCM) and Departamento de An\'alisis Matem\'atico y  Matem\'atica Aplicada (Universidad Complutense de Madrid) \\
C/ Nicol{\'a}s Cabrera, 13-15, Campus de Cantoblanco \\ 28040 Madrid, Spain}
\email{alberto.ruiz.alarcon@icmat.es}

\thanks{Research supported in part by grant PGC2018-097286-B-I00 (Spain).}

\keywords{Sobolev spaces; Vector-valued functions.}

\subjclass[2020]{46E35, 46E40, 46B22}


\begin{document}

\maketitle

\begin{abstract}
We are concerned here with Sobolev-type spaces of vector-valued functions. For
an open subset  $\Omega\subset\mathbb{R}^N$ and a Banach space $V$, we compare the
classical  Sobolev space $W^{1,p}(\Omega, V)$  with the so-called
Sobolev-Reshetnyak  space $R^{1,p}(\Omega, V)$. We see that, in general,
$W^{1,p}(\Omega, V)$ is a closed subspace of $R^{1,p}(\Omega, V)$. As a main
result, we obtain that $W^{1,p}(\Omega, V)=R^{1,p}(\Omega, V)$ if, and only if,
the Banach space $V$ has the Radon-Nikodým property
\end{abstract}

\section*{Introduction}

This paper deals with first order Sobolev spaces of vector-valued functions.
For an open subset $\Omega\subset\mathbb{R}^N$ and a Banach space $V$, we will first
consider the classical Sobolev space $W^{1,p}(\Omega, V)$ of functions defined
on $\Omega$ and taking values in $V$. This space is defined using the notion of
Banach-valued weak partial derivatives in the context of  Bochner integral, much
in the same way as the usual Sobolev space of scalar-valued functions.

A different notion of Sobolev space was introduced by Reshetnyak in
\cite{Reshetnyak} for functions defined on an open subset $\Omega\subset\mathbb{R}^N$
and taking values in a metric space. Here we will consider only the case of
functions with values in a Banach space $V$. The corresponding
Sobolev-Reshetnyak space $R^{1,p}(\Omega, V)$ has been considered in \cite{H-T}
and extensively studied in \cite{HKST-paper}. This space is defined by a
``scalarization'' procedure, by composing the functions taking values in $V$
with continuous linear functionals of the dual space $V^*$ in a suitable uniform
way. It should be noted that there is a further notion of Sobolev space, in the
more general setting of functions defined on a  metric measure space
$(X, d, \mu)$ and taking values in a Banach space $V$. This is the so-called
Newtonian-Sobolev space, denoted by $N^{1,p}(X, V)$, which is defined using the
notion of upper gradients and line integrals. This space was introduced by
Heinonen et al. in \cite{HKST-paper}, combining the approaches of Shamungalingam
in \cite{Sh} and Reshetnyak in \cite{Reshetnyak}. We refer to the book
\cite{HKST-book} for an extensive and detailed study of Newtonian-Sobolev
spaces. In the case that the metric measure space $(X, d, \mu)$ is an open
subset $\Omega$ of euclidean space $\mathbb{R}^N$, it follows from Theorem 3.17 in
\cite{HKST-paper} or Theorem 7.1.20 in \cite{HKST-book} that, in fact,
$R^{1,p}(\Omega, V)=N^{1,p}(\Omega, V)$.

Our main purpose in this paper is to compare the spaces $W^{1,p}(\Omega, V)$ and
$R^{1,p}(\Omega, V)$. In general, we have that $W^{1,p}(\Omega, V)$ is a closed
subspace of $R^{1,p}(\Omega, V)$. As a main result, we obtain that
$W^{1,p}(\Omega, V)=R^{1,p}(\Omega, V)$ if, and only if, the space $V$ has the
Radon-Nikodým property (see Theorem \ref{main thm}). Note that this contradicts
Theorem 2.14 of \cite{H-T}. It turns out that the proof of Theorem 2.14 of
\cite{H-T} is not correct, and the gap is located in Lemma 2.12, since the
so-called $w^*$-partial derivatives need not be measurable, and in this case
they cannot be the weak partial derivatives.

The contents of the paper are as follows. In Section 1, we recall some basic
notions about measurability of Banach-valued functions and Bochner integral.
Section 2 is devoted to the concept of $p$-modulus of a family of curves. We
briefly review its definition and fundamental properties, which will be used
along the paper. In Section 3, the Sobolev space $W^{1,p}(\Omega, V)$ is
considered. In particular, we prove in Theorem \ref{acc} that every function
$f\in W^{1,p}(\Omega, V)$ admits a representative which is absolutely continuous
and almost everywhere differentiable along each rectifiable curve, except for a
family of curves with zero $p$-modulus. The Sobolev-Reshetnyak space
$R^{1,p}(\Omega, V)$ is considered in Section 4. We prove in Theorem \ref{Racc}
that every function $f\in R^{1,p}(\Omega, V)$ admits a representative which is
absolutely continuous along each rectifiable curve, except for a family of
curves with zero $p$-modulus. Finally, in Theorem \ref{main thm} we prove that
the equality $W^{1,p}(\Omega, V)=R^{1,p}(\Omega, V)$ provides a new characterization 
of the Radon-Nikodým property for Banach spaces. 

\section{Integration of vector-valued functions}

Along this section, $(\Omega ,\Sigma ,\mu )$ will denote a $\sigma$-finite
measure space and  $V$ a Banach space. We are going to recall first some basic
facts about measurability of Banach-valued functions. A function
$s:\Omega \to V$ is said to be a \emph{measurable simple function} if
there exist vectors $v_1,\ldots,v_m\in V$ and disjoint measurable subsets
$E_1,\dots,E_m$ of $\Omega$ such that
$$
s = \sum_{i=1}^m v_i \chi_{E_i}.
$$
A function $f:\Omega \to V$ is said to be \emph{measurable} if there
exists a sequence of measurable simple functions
$\{s_n:\Omega\to V\}_{n=1}^\infty$ that converges to $f$ almost
everywhere on $\Omega$. The Pettis measurability theorem gives the following
characterization of measurable functions
(see e.g. \cite{D-U} or \cite{HKST-book}):
\begin{theorem}[Pettis]
Consider a  $\sigma$-finite measure space $(\Omega ,\Sigma ,\mu )$ and a Banach
space $V$. A function $f:\Omega\to V$ is measurable if and only if
satisfies the following two conditions:
\begin{enumerate}
\item $f$ is weakly-measurable, i.e., for each $v^*\in V^*$, we have that
$\langle v^*,f\rangle :\Omega \to \mathbb{R}$ is measurable.
\item $f$ is essentially separable-valued, i.e., there exists $Z\subset\Omega$
with $\mu(Z) = 0$ such that $f(\Omega\setminus Z)$ is a separable subset of $V$.
\end{enumerate}
\end{theorem}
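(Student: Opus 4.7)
My plan is to prove the two directions separately. The forward implication (measurable $\Rightarrow$ (1) and (2)) is routine: starting from simple functions $s_n\to f$ a.e., the countably many values $v_i^{(n)}$ taken by all of the $s_n$ together show that $f$ is essentially separable-valued, and for each $v^*\in V^*$ the real-valued simple functions $\langle v^*,s_n\rangle$ converge a.e.\ to $\langle v^*,f\rangle$, so the latter is measurable as an a.e.\ limit of measurable functions.

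The converse is the substantive direction. After discarding the null set provided by (2), I may assume $f(\Omega)\subset M$ for some separable closed subspace $M\subset V$. The main obstacle is that weak measurability alone does not give measurability of the norm $\omega\mapsto\|f(\omega)-v\|$, because the natural supremum defining the dual norm is taken over the uncountable unit ball of $V^*$; this is precisely where the separability hypothesis has to enter. To overcome it I pick a countable dense set $\{x_j\}_{j\geq 1}\subset M$ and, via Hahn--Banach, functionals $v_j^*\in V^*$ with $\|v_j^*\|=1$ and $\langle v_j^*,x_j\rangle=\|x_j\|$. A short density argument then shows that $\|x\|=\sup_j\langle v_j^*,x\rangle$ for every $x\in M$, so that $\omega\mapsto\|f(\omega)-v\|$ is measurable for each fixed $v\in V$ as a countable supremum of measurable functions.

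With norm measurability established, I fix a countable dense sequence $\{w_k\}_{k\geq 1}\subset M$ and, for each $n\geq 1$, define the measurable partition
\[
A_k^n=\bigl\{\omega\in\Omega:\|f(\omega)-w_k\|<1/n\bigr\}\setminus\bigcup_{j<k}A_j^n.
\]
The countably valued function $\tilde s_n=\sum_{k\geq 1}w_k\chi_{A_k^n}$ satisfies $\|\tilde s_n(\omega)-f(\omega)\|<1/n$ everywhere on $\Omega$. To turn $\tilde s_n$ into a bona fide simple function in the sense of the definition, I invoke $\sigma$-finiteness: writing $\Omega=\bigcup_m\Omega_m$ with $\mu(\Omega_m)<\infty$, I keep only finitely many terms of $\tilde s_n$ on each $\Omega_m$, chosen so that the discarded set has measure at most $1/(n\cdot 2^m)$, set the function to zero elsewhere, and extract a diagonal sequence converging to $f$ almost everywhere by Borel--Cantelli. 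This produces the required sequence of measurable simple functions.
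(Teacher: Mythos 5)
The paper does not actually prove this statement: it is the classical Pettis measurability theorem, quoted with a pointer to \cite{D-U} and \cite{HKST-book}, so there is no in-paper argument to compare yours against. What you have written is essentially the standard textbook proof (the one in Diestel--Uhl): the forward direction via the countable set of values of the approximating simple functions, and the converse via norming functionals $v_j^*$ for a dense sequence in the separable range, giving $\|x\|=\sup_j\langle v_j^*,x\rangle$ on $M$ and hence measurability of $\omega\mapsto\|f(\omega)-w_k\|$, followed by the nearest-point partition $A_k^n$ and a $\sigma$-finiteness truncation. The structure is sound; note only that the identity $\|x\|=\sup_j\langle v_j^*,x\rangle$ holds for $x\in M$, not for arbitrary $x\in V$ as your phrasing ``for each fixed $v\in V$'' suggests --- harmless here, since you only apply it with $v=w_k\in M$.

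The one step that does not close as written is the final Borel--Cantelli argument. If on $\Omega_m$ you discard a set of measure at most $1/(n\cdot 2^m)$ at stage $n$, then for fixed $m$ the series $\sum_n 1/(n\cdot 2^m)$ diverges, so Borel--Cantelli gives nothing about almost every point eventually avoiding the discarded sets. Two easy repairs: either choose the discarded set on $\Omega_m$ at stage $n$ to have measure at most $2^{-n-m}$ (then $\sum_n 2^{-n-m}<\infty$ and Borel--Cantelli shows that almost every $\omega\in\Omega_m$ lies in only finitely many discarded sets, whence $s_n(\omega)=\tilde s_n(\omega)$ for large $n$ and $s_n(\omega)\to f(\omega)$), or make explicit that your ``diagonal sequence'' is a subsequence $n_j$ with $\sum_j 1/n_j<\infty$, to which Borel--Cantelli does apply. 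With either one-line fix the proof is complete and correct.
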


Let $\|\cdot\|$ denote the norm of $V$. Note that, if
$f:\Omega\to V$ is measurable, the scalar-valued function
$\|f\|: \Omega \to \mathbb{R}$ is also measurable. Also it can be seen
that any convergent sequence of measurable functions converges to a measurable
function.

For measurable Banach-valued functions, the Bochner integral is defined as
follows. Suppose first that  $s=\sum_{i=1}^m v_i\chi_{E_i}$ is a measurable
simple function as before, where $E_1,\ldots,E_m$ are measurable, pairwise
disjoint, and furthermore $\mu(E_i)<\infty $ for each $i\in\{1,\dots ,m\}$. We
say then that $s$ is \emph{integrable} and we define the integral of $s$ by
$$
\int_\Omega s \, d\mu :=\sum_{i=1}^m   \mu (E_i) v_i.
$$
Now consider an arbitrary measurable function $f:\Omega\to V$. We say
that $f$ is \emph{integrable} if there exists a sequence $\{ s_n\}_{n=1}^\infty$
of integrable simple functions such that
$$
\lim_{n\to\infty} \int_\Omega \|s_n-f\| \, d\mu = 0.
$$
In this case, the \emph{Bochner integral} of $f$ is defined as:
$$
\int_\Omega f \, d\mu:=\lim_{n\to\infty} \int_\Omega s_n \, d\mu.
$$
It can be seen that this limit exists as an element of $V$, and it does not
depend on the choice of the sequence $\{s_n \}_{n=1}^\infty$. Also, for a
measurable subset $E\subset \Omega$, we say that $f$ is integrable on $E$ if
$f \, \chi_E$ is integrable on $\Omega$, and we denote
$\int_E f \, d\mu =\int_\Omega f \, \chi_E \, d\mu$.
The following characterization of Bochner integrability will be useful
(see e.g. Proposition 3.2.7 in \cite{HKST-book}):

\begin{proposition}
Let $(\Omega ,\Sigma ,\mu )$ be a $\sigma$-finite measure space and $V$ a Banach
space. A function $f:\Omega \to V$ is Bochner-integrable if, and only
if, $f$ is measurable and $\int_\Omega \|f\| \, d\mu <\infty$.
\end{proposition}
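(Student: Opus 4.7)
The plan is to prove the two implications separately. For the easy direction, if $f$ is Bochner-integrable then $f$ is measurable by definition, and picking any approximating sequence of integrable simple functions $\{s_n\}$ with $\int_\Omega \|s_n - f\|\, d\mu \to 0$, the triangle inequality $\|f\| \le \|f-s_n\| + \|s_n\|$, together with the fact that an integrable simple function has $\|s_n\| \in L^1(\mu)$ (its nonzero level sets have finite measure by definition), immediately yields $\int_\Omega \|f\|\, d\mu < \infty$.

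For the nontrivial converse, I would start from the hypothesis that $f$ is measurable, so there is a sequence $\tilde{s}_n$ of measurable simple functions converging to $f$ almost everywhere; by Pettis's theorem I may also assume, after discarding a null set, that $f$ takes values in a separable subspace. The $\tilde s_n$ need not have finite-measure supports, so the main device is to truncate them. I would define
$$
s_n(x) = \tilde{s}_n(x) \cdot \chi_{\{y \, : \, \|\tilde{s}_n(y)\| \le 2\|f(y)\|\}}(x).
$$
Each $s_n$ is a measurable simple function, and since $\tilde{s}_n(x)\to f(x)$ forces $\|\tilde{s}_n(x)\| \le 2\|f(x)\|$ eventually, one gets $s_n \to f$ almost everywhere. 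The bound $\|s_n\| \le 2\|f\|$ gives $\int_\Omega \|s_n\|\, d\mu \le 2\int_\Omega \|f\|\, d\mu < \infty$, which forces every nonzero level set of $s_n$ to have finite measure, so $s_n$ is an \emph{integrable} simple function in the sense of the text. Finally, the pointwise estimate $\|s_n - f\| \le \|s_n\| + \|f\| \le 3\|f\|$ lets me invoke the scalar dominated convergence theorem to deduce $\int_\Omega \|s_n - f\|\, d\mu \to 0$, and $f$ is Bochner-integrable by definition.

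The main technical point I expect to require care is the truncation step: one must confirm that the scalar functions $\|\tilde{s}_n\|$ and $\|f\|$ are measurable (which follows from the remark after Pettis's theorem in the text) so that the indicator $\chi_{\{\|\tilde{s}_n\| \le 2\|f\|\}}$ is measurable, and then verify that the resulting $s_n$ is genuinely a measurable simple function with nonzero level sets of finite measure. Once those measurability bookkeeping points are settled, the rest is standard scalar integration theory applied to the dominating function $3\|f\| \in L^1(\mu)$.
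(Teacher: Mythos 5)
The paper does not actually prove this proposition; it is quoted as a known fact with a reference to Proposition 3.2.7 of \cite{HKST-book}, so there is no in-paper argument to compare against. Your proof is correct and is essentially the standard one found in the literature (e.g.\ Diestel--Uhl): the forward direction via the triangle inequality, and the converse via truncating an a.e.\ approximating sequence of measurable simple functions so as to dominate it by $2\|f\|$ and then applying the scalar dominated convergence theorem to $\|s_n-f\|\le 3\|f\|$. All the measurability bookkeeping you flag does go through: $\|\tilde s_n\|$ and $\|f\|$ are measurable, so the truncation set is measurable, $s_n$ is again a measurable simple function, and the bound $\|v_i\|\,\mu(E_i\cap A_n)\le\int_\Omega\|s_n\|\,d\mu<\infty$ forces the nonzero level sets to have finite measure. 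One small imprecision worth fixing: your justification of $s_n\to f$ a.e.\ (``$\tilde s_n(x)\to f(x)$ forces $\|\tilde s_n(x)\|\le 2\|f(x)\|$ eventually'') is only valid where $f(x)\neq 0$; at a point with $f(x)=0$ the inequality may fail for all $n$, but then the truncation simply sets $s_n(x)=0=f(x)$, so convergence holds there trivially. With that one-line case distinction added, the argument is complete. (The appeal to Pettis's theorem and essential separability is not actually needed anywhere in your argument and can be dropped.)
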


Furthermore, if $f:\Omega \to V$ is integrable, then for each
$v^*\in V^*$ we have that $\langle v^*,f\rangle :\Omega \to \mathbb{R}$
is also integrable, and
$$
\left\langle v^*, \int_{\Omega} f \, d\mu \right\rangle =
\int_{\Omega} \langle v^*, f\rangle \, d\mu.
$$
In addition,
$$
\left\|\int_{\Omega} f \, d\mu \right\| \leq
\int_{\Omega} \|f\| \, d\mu.
$$

Finally, we introduce the classes of Banach-valued $p$-integrable functions on
$(\Omega ,\Sigma ,\mu )$ in the usual way. We refer the reader to \cite{D-U} or
\cite{HKST-book} for further information. Fix $1\leq p < \infty$. Then
$L^p(\Omega, V)$ is defined as the space of all equivalence classes of
measurable functions $f: \Omega \to V$ for which
$$
\int_\Omega\|f\|^p \, d\mu <\infty.
$$
Here, two measurable functions $f, g: \Omega \to V$ are \emph{equivalent} if
they coincide almost everywhere, that is,
$\mu(\{x\in \Omega:f(x)\neq g(x)\})=0$. It can be seen that the space
$L^p(\Omega, V)$ is a Banach space endowed with the natural norm
$$
\|f\|_p:=\left(\int_\Omega\|f\|^p d\mu \right)^\frac{1}{p}.
$$
As customary, for scalar-valued functions we denote
$L^p(\Omega )=L^p(\Omega ,\mathbb{R})$.

In the special case that $\Omega$ is an open subset of euclidean space
$\mathbb{R}^N$, endowed with the Lebesgue measure, we will also consider the
corresponding spaces $L^p_{\mathrm{loc}}(\Omega, V)$ of Banach-valued
\emph{locally $p$-integrable} functions. We say that a measurable function
$f: \Omega \to V$ belongs to $L^p_{\mathrm{loc}}(\Omega, V)$ if every point in
$\Omega$ has a neighborhood on which $f$ is $p$-integrable.

\section{Modulus of a family of curves}

The concept of modulus of a curve family can be defined in the general setting
of metric measure spaces (see e.g. \cite{Survey} or Chapter 5 of
\cite{HKST-book} for a detailed exposition) but we will restrict ourselves to
the case of curves defined in an open subset $\Omega$ of space
$\mathbb{R}^N$, where we consider the Lebesgue measure $\mathcal L^N$
and the euclidean norm $|\cdot|$.
By a \emph{curve} in  $\Omega$ we understand a continuous function
$\gamma:[a, b]\to \Omega$, where $[a, b]\subset\mathbb{R}$ is a compact
interval. The \emph{length} of $\gamma$ is given by
$$
\ell(\gamma):=\sup_{t_0 < \cdots < t_n }
\sum_{j=1}^n | \gamma(t_{j-1})-\gamma(t_j)|,
$$
where the supremum is taken over all finite partitions $a=t_0<\cdots<t_n=b$
of the interval $[a, b]$. We say that $\gamma$ is  \emph{rectifiable} if its
length is finite.  Every rectifiable curve $\gamma$ can be re-parametrized so
that it is \emph{arc-length parametrized}, i.e., $[a, b]=[0,\ell(\gamma)]$
and for each $0\leq s \leq t \leq \ell(\gamma)$ we have
$$
\ell(\gamma|_{[s, t]})=t-s.
$$
We can assume all rectifiable curves to be arc-length parametrized as above.
The integral of a Borel function $\rho:\Omega \to[0,\infty]$ over an
arc-length parametrized curve $\gamma$ is defined as
$$
\int_\gamma \rho \,  ds := \int_0^{\ell(\gamma)} \rho (\gamma (t)) \,  dt .
$$

In what follows, let $\mathcal{M}$ denote the family of all nonconstant
rectifiable curves in $\Omega$. For each subset $\Gamma\subset\mathcal{M}$,
we denote by $F(\Gamma )$ the so-called \emph{admissible functions} for
$\Gamma$, that is, the family of all Borel functions
$\rho :\Omega\to [0,\infty ]$ such that
$$
\int_\gamma \rho \, ds\geq 1
$$
for all $\gamma\in\Gamma$.
Then, for each $1\leq p <\infty$, the \emph{$p$-modulus of $\Gamma$} is defined
as follows:
$$
\mathrm{Mod}_p(\Gamma ):=
\inf_{\rho\in F(\Gamma )}\int_\Omega \rho^p \, d \mathcal L^N.
$$
We say that a property holds for \emph{$p$-almost every curve}
$\gamma\in\mathcal{M}$ if the $p$-modulus of the family of curves failing the
property is zero.
The basic properties of $p$-modulus are given in the next proposition (see e.g.
Theorem 5.2 of \cite{Survey} or Chapter 5 of \cite{HKST-book}):

\begin{proposition}
The $p$-modulus is an outer measure on $\mathcal{M}$, that is:
\begin{enumerate}
\item $\mathrm{Mod}_p(\emptyset)=0$.
\item If $\Gamma_1\subset\Gamma_2$ then $\text{\rm Mod}_p(\Gamma_1 )\leq
\mathrm{Mod}_p(\Gamma_2)$.
\item $\mathrm{Mod}_p \left(\bigcup_{n=1}^\infty \Gamma_n\right) \leq
\sum_{n=1}^\infty\mathrm{Mod}_p(\Gamma_n )$.
\end{enumerate}
\end{proposition}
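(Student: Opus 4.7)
The plan is to verify the three properties in turn, with (1) and (2) being essentially tautological and (3) being the only substantive step, handled by the standard $\ell^p$-aggregation trick.

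For (1), I would simply observe that the admissibility condition $\int_\gamma \rho\,ds\geq 1$ for all $\gamma\in\emptyset$ is vacuous, so the zero function belongs to $F(\emptyset)$ and hence $\mathrm{Mod}_p(\emptyset)\leq\int_\Omega 0\,d\mathcal{L}^N=0$. For (2), if $\Gamma_1\subset\Gamma_2$ then every $\rho\in F(\Gamma_2)$ also satisfies $\int_\gamma\rho\,ds\geq 1$ for every $\gamma\in\Gamma_1$, so $F(\Gamma_2)\subset F(\Gamma_1)$; taking the infimum over the larger set yields a smaller number, which gives the desired inequality.

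The main step is (3). First, I would dispose of the trivial case in which some $\mathrm{Mod}_p(\Gamma_n)=\infty$, where the right-hand side is $\infty$ and there is nothing to prove. Assuming all terms are finite, fix $\varepsilon>0$ and, for each $n\in\mathbb{N}$, use the definition of the infimum to choose $\rho_n\in F(\Gamma_n)$ with
\[
\int_\Omega \rho_n^{\,p}\,d\mathcal{L}^N \;\leq\; \mathrm{Mod}_p(\Gamma_n)+\frac{\varepsilon}{2^n}.
\]
Then define the aggregated density
\[
\rho \;:=\; \Bigl(\sum_{n=1}^\infty \rho_n^{\,p}\Bigr)^{1/p},
\]
which is Borel-measurable with values in $[0,\infty]$ as a pointwise limit of finite sums of Borel functions.

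The key verification is that $\rho$ is admissible for $\Gamma:=\bigcup_{n=1}^\infty \Gamma_n$. For any $\gamma\in\Gamma$ there is some index $k$ with $\gamma\in\Gamma_k$; since $\rho\geq \rho_k$ pointwise on $\Omega$, we obtain $\int_\gamma \rho\,ds\geq\int_\gamma \rho_k\,ds\geq 1$, so $\rho\in F(\Gamma)$. Applying the monotone convergence theorem (to the partial sums $\sum_{n=1}^N \rho_n^{\,p}$) gives
\[
\int_\Omega \rho^{\,p}\,d\mathcal{L}^N \;=\; \sum_{n=1}^\infty \int_\Omega \rho_n^{\,p}\,d\mathcal{L}^N \;\leq\; \sum_{n=1}^\infty \mathrm{Mod}_p(\Gamma_n)+\varepsilon,
\]
and therefore $\mathrm{Mod}_p(\Gamma)\leq\sum_{n=1}^\infty \mathrm{Mod}_p(\Gamma_n)+\varepsilon$. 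Letting $\varepsilon\to 0^+$ yields the countable subadditivity. The only mild subtlety is confirming Borel measurability of $\rho$ and handling the fact that $\rho$ can take the value $+\infty$ on a set of positive measure (in which case the integral inequality is trivially $\infty\geq\infty$, but in our setting the bound guarantees $\rho<\infty$ a.e.); beyond that, the argument is a clean application of the $\sigma$-subadditivity of the Lebesgue integral.
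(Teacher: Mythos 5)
Your proof is correct and is exactly the standard argument that the paper itself defers to (it cites Theorem 5.2 of Haj{\l}asz's survey and Chapter 5 of Heinonen--Koskela--Shanmugalingam--Tyson rather than proving the proposition): parts (1) and (2) follow from the vacuity of the admissibility condition and the inclusion $F(\Gamma_2)\subset F(\Gamma_1)$, and part (3) is the usual $\varepsilon 2^{-n}$ selection combined with the aggregated density $\rho=(\sum_n\rho_n^p)^{1/p}$ and monotone convergence. No gaps.
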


For the next characterization of families of curves with zero $p$-mo\-du\-lus we
refer to Theorem 5.5 of \cite{Survey} or Lemma 5.2.8 of \cite{HKST-book}:

\begin{lemma}\label{inftyint}
Let $\Gamma\subset\mathcal{M}$. Then $\mathrm{Mod}_p(\Gamma )=0$ if, and only if, there
exists a nonnegative Borel function $g\in L^p(\Omega )$ such that
$$\int_\gamma g \, ds=\infty$$
for all $\gamma\in\Gamma$.
\end{lemma}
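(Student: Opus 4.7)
The proof splits naturally into the two implications, and neither one is deep; the core ideas are a geometric/harmonic-series trick for the nontrivial direction and a simple rescaling for the easy one.

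For the easy direction ($\Leftarrow$), assume $g\in L^p(\Omega)$ is a nonnegative Borel function with $\int_\gamma g\,ds = \infty$ for every $\gamma\in\Gamma$. For each integer $n\geq 1$, set $\rho_n := g/n$. Each $\rho_n$ is a nonnegative Borel function, and for every $\gamma\in\Gamma$ we still have $\int_\gamma \rho_n\,ds = \infty \geq 1$, so $\rho_n\in F(\Gamma)$. Then
$$
\mathrm{Mod}_p(\Gamma)\leq \int_\Omega \rho_n^p\,d\mathcal{L}^N = \frac{1}{n^p}\int_\Omega g^p\,d\mathcal{L}^N \xrightarrow[n\to\infty]{} 0,
$$
so $\mathrm{Mod}_p(\Gamma)=0$.

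For the harder direction ($\Rightarrow$), my plan is to manufacture $g$ as a fast-convergent series of admissible functions. Since $\mathrm{Mod}_p(\Gamma)=0$, for each $n\in\mathbb{N}$ I can choose $\rho_n\in F(\Gamma)$ (so $\rho_n$ is nonnegative Borel and $\int_\gamma \rho_n\,ds\geq 1$ for all $\gamma\in\Gamma$) with $\int_\Omega \rho_n^p\,d\mathcal{L}^N < 2^{-np}$, i.e.\ $\|\rho_n\|_p < 2^{-n}$. Then define
$$
g := \sum_{n=1}^\infty \rho_n.
$$
This is a countable sum of nonnegative Borel functions, hence itself a nonnegative Borel function (possibly taking the value $+\infty$). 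By Minkowski's inequality applied to the partial sums and monotone convergence,
$$
\|g\|_p \leq \sum_{n=1}^\infty \|\rho_n\|_p \leq \sum_{n=1}^\infty 2^{-n} = 1,
$$
so $g\in L^p(\Omega)$; in particular $g<\infty$ almost everywhere (which is fine, we only need $g\in L^p$). Finally, for every $\gamma\in\Gamma$ the Tonelli/monotone convergence theorem applied to the series of nonnegative integrands gives
$$
\int_\gamma g\,ds = \sum_{n=1}^\infty \int_\gamma \rho_n\,ds \geq \sum_{n=1}^\infty 1 = \infty,
$$
as required.

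The only real point of care is the Borel measurability of $g$, which is automatic since a countable sum of Borel functions is Borel, and the interchange of sum and line integral, which is legitimate because all terms are nonnegative. So I do not expect a genuine obstacle; the proof is essentially the standard outer-measure regularization argument, and the geometric factor $2^{-n}$ in the choice of $\rho_n$ is what makes both the $L^p$ norm summable and the line integrals diverge uniformly along $\Gamma$.
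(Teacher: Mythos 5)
Your proof is correct; the paper does not prove this lemma itself but cites Theorem 5.5 of Haj\l asz's survey and Lemma 5.2.8 of the Heinonen--Koskela--Shanmugalingam--Tyson book, and your argument (rescaling admissible functions for the easy direction, summing a geometrically small sequence of admissible functions and using Minkowski plus monotone convergence for the hard one) is precisely the standard proof given there. No gaps.
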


We will also use the following fact (see, e.g. Lemma 5.2.15 in \cite{HKST-book}):

\begin{lemma}\label{Gamma+}
Suppose that $E$ is a subset of $\Omega$ with zero-measure and denote
$\Gamma_E^+ :=\{ \gamma\in \mathcal{M}:
\mathcal{L}^1(\{ t\in [0,\ell (\gamma )]:\gamma (t)\in E\} )> 0\}$.
Then, for every $1\leq p<\infty$, $\mathrm{Mod}_p(\Gamma_E^+)=0$.
\end{lemma}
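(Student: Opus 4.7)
The plan is to invoke Lemma \ref{inftyint}, which reduces the task to exhibiting a single nonnegative Borel function $g\in L^p(\Omega)$ with $\int_\gamma g\, ds=\infty$ for every $\gamma\in\Gamma_E^+$. The natural candidate is a function that takes the value $+\infty$ on $E$ itself, built by stacking indicators of a sequence of open neighborhoods of $E$ whose measures decay fast enough to keep the $L^p$-norm finite.

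Concretely, since $\mathcal{L}^N(E)=0$ and $\Omega$ is open, outer regularity of Lebesgue measure lets me pick, for each $n\in\mathbb{N}$, an open set $U_n$ with $E\subset U_n\subset\Omega$ and $\mathcal{L}^N(U_n)\leq 2^{-np}$. I then set
$$
g:=\sum_{n=1}^\infty \chi_{U_n},
$$
which is a nonnegative Borel function on $\Omega$, identically $+\infty$ on $E$. Minkowski's inequality in $L^p(\Omega)$ gives
$$
\|g\|_p\leq \sum_{n=1}^\infty \|\chi_{U_n}\|_p=\sum_{n=1}^\infty \mathcal{L}^N(U_n)^{1/p}\leq \sum_{n=1}^\infty 2^{-n}<\infty,
$$
so $g\in L^p(\Omega)$.

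For any $\gamma\in\Gamma_E^+$, the set $A:=\{t\in[0,\ell(\gamma)]:\gamma(t)\in E\}$ has positive one-dimensional Lebesgue measure by definition of $\Gamma_E^+$, and $(g\circ\gamma)(t)=+\infty$ for every $t\in A$; moreover $g\circ\gamma$ is Borel measurable on $[0,\ell(\gamma)]$ because $\gamma$ is continuous and $g$ is Borel. Consequently
$$
\int_\gamma g\, ds=\int_0^{\ell(\gamma)} g(\gamma(t))\, dt\geq \int_A (+\infty)\, dt=+\infty,
$$
and Lemma \ref{inftyint} yields $\mathrm{Mod}_p(\Gamma_E^+)=0$.

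There is no substantive obstacle in this argument; the only point requiring a bit of care is the choice of exponent $np$ rather than $n$ in the estimate $\mathcal{L}^N(U_n)\leq 2^{-np}$, which is dictated by the $1/p$-power appearing in the $L^p$-norm of an indicator, and the verification that $U_n$ can be taken inside $\Omega$ (which is immediate by intersecting with $\Omega$, since $\Omega$ is open and $E\subset\Omega$).
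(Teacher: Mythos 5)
Your proof is correct. The paper does not prove this lemma at all --- it simply cites Lemma 5.2.15 of \cite{HKST-book} --- and your argument is the standard one underlying that reference: reduce via Lemma \ref{inftyint} to producing a nonnegative Borel $g\in L^p(\Omega)$ that is identically $+\infty$ on $E$, built as $\sum_n\chi_{U_n}$ with $\mathcal{L}^N(U_n)\le 2^{-np}$ and summed via Minkowski's inequality. The only cosmetic point is that $E$ need not be Borel, so the set $A=\gamma^{-1}(E)$ may not be measurable; but the Borel set $\{t: g(\gamma(t))=+\infty\}$ contains $A$ and hence has positive measure for $\gamma\in\Gamma_E^+$, which is all the final estimate needs.
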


Next we give a relevant example concerning $p$-modulus:

\begin{lemma}\label{cubes}
Let $N > 1$ be a natural number, let
$w \in \mathbb R^N$ be a vector with $| w | =1$ and let $H$ be a
hyperplane orthogonal to $w$, on which we consider the corresponding
$(N-1)$-dimensional Lebesgue measure $\mathcal L^{N-1}$. For each Borel subset
$E\subset H$ consider the family $\Gamma (E)$ of all nontrivial straight
segments parallel to $w$ and contained in a line passing through $E$. Then, for
a fixed $1\leq p< \infty$, we have that $\mathrm{Mod}_p(\Gamma (E))=0$ if, and only if,
$\mathcal L^{N-1}(E)=0$.
\end{lemma}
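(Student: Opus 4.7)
I would prove the biconditional by separately establishing the two implications, reducing the easier direction to Lemma~\ref{Gamma+} and attacking the harder one by a direct subdivision argument on $\Gamma(E)$.

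\emph{Sufficient direction.} Assume $\mathcal{L}^{N-1}(E)=0$. Define the cylinder $\tilde{E} := (E + \mathbb{R}w) \cap \Omega$. After rotating coordinates so that $w = e_N$, this set corresponds to $E \times \mathbb{R}$ intersected with $\Omega$, so Fubini's theorem yields $\mathcal{L}^{N}(\tilde{E})=0$. Every curve $\gamma\in\Gamma(E)$ lies on some line $L_y$ with $y\in E$ and is therefore entirely contained in $\tilde E$, so $\mathcal{L}^{1}(\{t\in[0,\ell(\gamma)]:\gamma(t)\in\tilde E\})=\ell(\gamma)>0$. Hence $\Gamma(E)\subset\Gamma_{\tilde E}^+$, and Lemma~\ref{Gamma+} gives $\mathrm{Mod}_p(\Gamma(E))=0$.

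\emph{Necessary direction.} I argue by contrapositive: assume $\mathcal{L}^{N-1}(E)>0$ and show $\mathrm{Mod}_p(\Gamma(E))=+\infty$. The crucial feature of $\Gamma(E)$ is that every nontrivial subsegment of a segment in $\Gamma(E)$ is itself in $\Gamma(E)$. Fix any admissible $\rho\in F(\Gamma(E))$, a point $y\in E$ with $L_y\cap\Omega\neq\emptyset$, and a bounded open subinterval $(a,b)$ contained in a single connected component of $L_y\cap\Omega$. Partitioning $(a,b)$ into $n$ equal pieces and applying the admissibility inequality to each of the resulting segments yields
$$
\int_a^b \rho(y+tw)\, dt \geq n
$$
for every $n\in\mathbb{N}$, so the integral equals $+\infty$. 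Because $\rho^p\geq \rho-1$ pointwise for $\rho\geq 0$ and $p\geq 1$, I also obtain $\int_a^b \rho(y+tw)^p\,dt = +\infty$.

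Applying Fubini along the $w$-direction gives
$$
\int_\Omega \rho^p\, d\mathcal{L}^N \geq \int_{E} \int_{L_y\cap\Omega}\rho(y+tw)^p\, dt\, d\mathcal{L}^{N-1}(y) = +\infty,
$$
since the inner integral is $+\infty$ on a subset of $E$ of positive $(N-1)$-measure (tacitly, $\Omega$ meets the relevant lines, as is the case when $\Omega = \mathbb{R}^N$). Hence $\mathrm{Mod}_p(\Gamma(E))=+\infty>0$. The only step requiring genuine care is the subdivision argument, which is the heart of the proof and leverages the closure of $\Gamma(E)$ under arbitrarily fine sub-segmentation; everything else is routine Fubini and the elementary bound $\rho^p\geq \rho-1$.
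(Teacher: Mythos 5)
Your proof is correct, but it takes a genuinely different route from the paper's. The paper decomposes $\Gamma(E)$ according to rational parameter intervals $[q,r]$ and invokes the exact formula $\mathrm{Mod}_p(\Gamma_{q,r})=\mathcal L^{N-1}(E)/(r-q)^p$ from 5.3.12 of \cite{HKST-book}, which settles both implications at once: countable subadditivity together with the subcurve principle gives the ``if'' direction, and monotonicity gives the ``only if'' direction. You avoid that external reference entirely: for ``if'' you note that every curve of $\Gamma(E)$ lives inside the null cylinder $(E+\mathbb{R}w)\cap\Omega$ and apply Lemma~\ref{Gamma+}; for ``only if'' you exploit the closure of $\Gamma(E)$ under passage to subsegments, so that subdividing into $n$ pieces forces $\int_{L_y\cap\Omega}\rho\,dt=\infty$ for every admissible $\rho$, and then the pointwise bound $\rho^p\ge\rho-1$ plus Fubini yields $\int_\Omega\rho^p\,d\mathcal L^N=\infty$. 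Your argument is more elementary and self-contained, and it actually proves the stronger statement $\mathrm{Mod}_p(\Gamma(E))=+\infty$ whenever $\mathcal L^{N-1}(E)>0$ (consistent with letting $r-q\to0$ in the paper's formula); what the paper's route buys is brevity and the quantitative value of the modulus for the fixed-length subfamilies. Two small points to tidy: when partitioning, work inside a compact subinterval $[a',b']\subset(a,b)$ so that the closed subsegments are genuine curves contained in $\Omega$; and the tacit hypothesis that the lines through a positive-measure subset of $E$ actually meet $\Omega$ is needed for the ``only if'' implication as literally stated --- an imprecision the paper's own proof shares and which is harmless in the way the lemma is later applied.
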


\begin{proof}
Each curve in $\Gamma (E)$ is of the form $\gamma_x(t) = x+ tw $, for some
$x\in E$, and is defined on some interval $a\leq t \leq b$. For each
$q, r \in \mathbb Q$ with $q<r$, let $\Gamma_{q, r}$ denote the family of all
such paths $\gamma_x$, where $x\in E$, which are defined on the fixed interval
$[q, r]$. According to the result in 5.3.12 by \cite{HKST-book}, we have that
$$
\mathrm{Mod}_p(\Gamma_{q, r})= \frac{\mathcal L^{N-1}(E)}{(r-q)^p}.
$$
Suppose first that $\mathcal L^{N-1}(E)=0$. Then $\mathrm{Mod}_p(\Gamma_{q, r})=0$
for all $q, r \in \mathbb Q$ with $q<r$. Thus by subadditivity we have that
$\mathrm{Mod}_p(\bigcup_{q, r}\Gamma_{q, r})=0$. Now each segment
$\gamma_x \in \Gamma (E)$ contains a sub-segment in some $\Gamma_{q, r}$. This
implies that the corresponding admissible functions satisfy
$F(\bigcup_{q, r}\Gamma_{q, r}) \subset F(\Gamma (E))$, and therefore
$$
\mathrm{Mod}_p(\Gamma (E)) \leq \mathrm{Mod}_p \Big(\bigcup_{q,r}\Gamma_{q,r} \Big)=0.
$$
Conversely, if $\mathrm{Mod}_p(\Gamma (E))=0$ then $\mathrm{Mod}_p(\Gamma_{q, r})=0$
for any $q, r \in \mathbb Q$ with $q<r$, and therefore $\mathcal L^{N-1}(E)=0$.
\end{proof}

We finish this Section with the classical Fuglede's Lemma (for a proof, see e.g.
Theorem 5.7 in \cite{Survey} or Chapter 5 in \cite{HKST-book}).

\begin{lemma}[Fuglede's Lemma]\label{fuglede}
Let $\Omega$ be an open subset of $\mathbb{R}^N$ and let $\{ g_n\}_{n=1}^\infty$
be a sequence of Borel functions $g_n:\Omega\to[-\infty,\infty]$ that converges
in $L^p(\Omega )$ to some Borel function $g:\Omega\to[-\infty,\infty]$.
Then there is a subsequence $\{ g_{n_k} \}_{k=1}^\infty$ such that
$$ \lim_{k\to\infty}\int_\gamma | g_{n_k} - g | \, ds = 0$$
for $p$-almost every curve $\gamma$ in $\Omega$.
\end{lemma}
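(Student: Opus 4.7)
The plan is to extract a rapidly converging subsequence and then control, via the admissibility definition of $\mathrm{Mod}_p$, the exceptional families of curves on which the line integral of $|g_{n_k}-g|$ is large. This is the standard Borel--Cantelli-type argument in the $p$-modulus setting.

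First I would pass to a subsequence $\{g_{n_k}\}$ with $\|g_{n_k}-g\|_p \leq 4^{-k}$; this is possible since $g_n\to g$ in $L^p(\Omega)$. Set $h_k := |g_{n_k}-g|$, a nonnegative Borel function in $L^p(\Omega)$ with $\|h_k\|_p \leq 4^{-k}$. For each $k$ define the exceptional family
$$\Gamma_k := \Bigl\{\gamma\in\mathcal{M} \,:\, \int_\gamma h_k \,ds > 2^{-k}\Bigr\}.$$
The function $\rho_k := 2^k h_k$ is Borel and satisfies $\int_\gamma \rho_k\,ds > 1$ for every $\gamma\in\Gamma_k$, hence $\rho_k\in F(\Gamma_k)$, and therefore
$$\mathrm{Mod}_p(\Gamma_k) \leq \int_\Omega \rho_k^p\,d\mathcal{L}^N = 2^{kp}\,\|h_k\|_p^{\,p} \leq 2^{-kp}.$$

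Next I would assemble the "limsup'' family. Set $\tilde\Gamma_m := \bigcup_{k=m}^\infty \Gamma_k$ and $\Gamma_\infty := \bigcap_{m=1}^\infty \tilde\Gamma_m$. By subadditivity of $\mathrm{Mod}_p$,
$$\mathrm{Mod}_p(\tilde\Gamma_m) \leq \sum_{k=m}^\infty 2^{-kp} \xrightarrow[m\to\infty]{} 0,$$
and by monotonicity $\mathrm{Mod}_p(\Gamma_\infty)=0$. If $\gamma\notin\Gamma_\infty$, then there exists $m$ with $\gamma\notin\Gamma_k$ for every $k\geq m$, which means $\int_\gamma h_k\,ds \leq 2^{-k}$ for every $k\geq m$, and consequently $\int_\gamma |g_{n_k}-g|\,ds\to 0$. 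Thus the family of curves along which the conclusion fails is contained in $\Gamma_\infty$ and therefore has zero $p$-modulus.

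There is no real obstacle: the argument is a pure $\mathrm{Mod}_p$ manipulation once the decay rate $\|h_k\|_p\leq 4^{-k}$ is balanced against the threshold $2^{-k}$ so that the bounds $\mathrm{Mod}_p(\Gamma_k)\leq 2^{-kp}$ form a convergent geometric series. The only point requiring a remark is that $h_k$ is Borel (so that $\rho_k$ is an admissible Borel function and the line integrals in the definition of $\Gamma_k$ make sense), which is immediate from the hypothesis that the $g_n$ and $g$ are Borel.
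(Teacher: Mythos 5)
Your argument is correct and is exactly the standard Borel--Cantelli-type proof of Fuglede's lemma; the paper itself gives no proof but defers to Theorem 5.7 of the cited survey of Haj{\l}asz and to Chapter 5 of the book of Heinonen--Koskela--Shanmugalingam--Tyson, where this same construction (rapidly convergent subsequence, the families $\Gamma_k$ with admissible functions $2^k|g_{n_k}-g|$, and the $\limsup$ family) appears. The only cosmetic point is that $|g_{n_k}-g|$ must be given some Borel value on the null set where both functions are infinite, which is harmless since that set does not affect the $L^p$ norms or the conclusion for $p$-almost every curve.
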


\section{Sobolev spaces $W^{1,p}(\Omega, V)$}

Let $1\leq p < \infty$, consider an open subset $\Omega$ of euclidean space
$\mathbb{R}^N$, where we consider the Lebesgue measure $\mathcal{L}^N$, and let
$V$ be a Banach space. We denote by $C^\infty_0(\Omega)$ the space of all
real-valued functions that are infinitely differentiable and have compact
support in $\Omega$. This class of functions allows us to apply the integration
by parts formula against functions in $L^p(\Omega ,V)$. In this way we can
define weak derivatives as follows. Given $f\in L^p(\Omega ,V)$ and
$i\in \{1,\ldots,N\}$, a function $f_i\in L^1_{\mathrm{loc}}(\Omega, V)$ is said
to be the \emph{$i$-th weak partial derivative} of $f$ if
$$
\int_\Omega \frac{\partial \varphi}{\partial x_i} \, f =
-\int_\Omega  \varphi \, f_i
$$
for every $\varphi\in C_0^\infty (\Omega )$. As defined, it is easy to see that
partial derivatives are unique, so we denote $f_i=\partial f/\partial x_i$.
If $f$ admits all weak partial derivatives, we define its \emph{weak gradient} as
the vector $\nabla f =(f_1,\ldots ,f_N)$, and the \emph{length} of the gradient
is
$$
|\nabla f|:=
\left(
\sum_{i=1}^N \left\|\frac{\partial f}{\partial x_i}\right\|^2
\right)^{\frac{1}{2}}.
$$
Using this, the classical first-order Sobolev spaces of Banach-valued functions
are defined as follows.

\begin{definition}
Let $1\leq p < \infty$, $\Omega$ be an open subset of $\mathbb{R}^N$ and let
$V$ be a Banach space. We define the Sobolev space $W^{1,p}(\Omega ,V)$ as the
set of all classes of functions $f\in L^p (\Omega ,V)$ that admit a weak
gradient satisfying $\partial f/\partial x_i\in L^p(\Omega ,V)$ for all
$i\in\{1,\ldots,N\}$. This space
is equipped with the natural norm
$$
\| f\|_{W^{1,p}} :=
\left(\int_{\Omega} \| f\|^p\right)^{\frac{1}{p}}
+\left(\int_{\Omega} |\nabla f|^p\right)^{\frac{1}{p}}.
$$
We denote by $W^{1,p}(\Omega )=W^{1,p}(\Omega ,\mathbb{R} )$.
\end{definition}

It can be shown that the space $W^{1,p}(\Omega ,V)$, endowed with this norm, is
a Banach space. Furthermore, the Meyers-Serrin theorem also holds in the context
of Banach-valued Sobolev functions, so in particular the space
$C^1(\Omega ,V)\cap W^{1,p}(\Omega,V)$  is dense in $W^{1,p}(\Omega ,V)$. We refer to
Theorem 4.11 in \cite{Kreuter} for a proof of this fact.

Recall that a function $f:[a,b]\to V$ is \emph{absolutely continuous} if for
each $\varepsilon >0$ there exists $\delta >0$ such that for every pairwise
disjoint intervals $[a_1,b_1],\ldots ,[a_m,b_m]\subset [a,b]$ such that
$\sum_{i=1}^m |b_i-a_i|<\delta$, we have that
$$
\sum_{i=1}^m\| f(b_i)-f(a_i)\| <\varepsilon.
$$
It is well known that every function in $W^{1,p}(\Omega ,V)$ admits a
representative which is absolutely continuous and almost everywhere
differentiable along almost every line parallel to a coordinate axis
(see Theorem 4.16 in \cite{Kreuter} or Theorem 3.2 in \cite{A-K}), where
differentiability is understood in the usual Fr\'echet sense. More generally,
we are going to show that this property can be extended to $p$-almost every
rectifiable curve on $\Omega$. We first need the following lemma:

\begin{lemma}{\label{lemma C1}}
Let $\Omega$ be an open subset of $\mathbb{R}^N$ and let $V$ be a Banach space.
If $f\in C^1(\Omega, V)$ and $\gamma$ is a rectifiable curve in $\Omega$,
parametrized by arc length, then $f\circ\gamma$ is absolutely continuous and
differentiable almost everywhere. Moreover, the derivative of $f\circ\gamma$
belongs to $L^1([0,\ell (\gamma)], V)$ and
$$
(f\circ\gamma)(t) - (f\circ\gamma)(0) = \int_0^t (f\circ\gamma)'(\tau) \, d\tau.
$$
for each $t\in [0,\ell (\gamma)]$.
\end{lemma}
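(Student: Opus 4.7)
\emph{Plan of proof.}
The approach has three parts: absolute continuity via a Lipschitz estimate, almost-everywhere differentiability through Rademacher's theorem and the chain rule, and finally the integral identity via scalarization against functionals in $V^*$.

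First, since $\gamma$ is arc-length parametrized, it is $1$-Lipschitz for the euclidean norm, so its image $K=\gamma([0,\ell(\gamma)])$ is compact in $\Omega$. Because $f\in C^1(\Omega,V)$, the Fréchet derivative $Df$ is continuous on $\Omega$ and hence bounded by some $L<\infty$ on a compact neighborhood of $K$; by the mean value inequality for $V$-valued $C^1$ maps, $f$ is locally $L$-Lipschitz near $K$, and a standard chaining argument along $\gamma$ then shows that $f\circ\gamma$ is $L$-Lipschitz on $[0,\ell(\gamma)]$. Being Lipschitz, $f\circ\gamma$ is absolutely continuous.

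Next, since $\gamma\colon[0,\ell(\gamma)]\to\mathbb{R}^N$ is Lipschitz into a finite-dimensional space, the classical Rademacher theorem guarantees that $\gamma$ is differentiable almost everywhere with $|\gamma'(t)|\le 1$. At every such $t$, the chain rule for Fréchet derivatives gives
\[
(f\circ\gamma)'(t)=Df(\gamma(t))\bigl(\gamma'(t)\bigr),
\]
which is measurable (composition of a continuous map with a measurable one) and bounded in norm by $L$; in particular it belongs to $L^1([0,\ell(\gamma)],V)$.

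The delicate step is the integral identity, since for a general Banach-valued absolutely continuous function the fundamental theorem of calculus can fail, and this failure is intimately tied to the Radon-Nikod\'ym property that motivates the paper. I would bypass this obstruction by scalarizing. For every $v^*\in V^*$, the composition $u:=\langle v^*,f\rangle$ lies in $C^1(\Omega)$; applying the classical scalar fundamental theorem of calculus to the scalar Lipschitz function $u\circ\gamma$ together with the scalar chain rule yields
\[
\langle v^*,f(\gamma(t))-f(\gamma(0))\rangle=\int_0^t\nabla u(\gamma(\tau))\cdot\gamma'(\tau)\,d\tau=\int_0^t\langle v^*,Df(\gamma(\tau))(\gamma'(\tau))\rangle\,d\tau.
\]
Because continuous linear functionals commute with the Bochner integral, the right-hand side equals $\bigl\langle v^*,\int_0^t Df(\gamma(\tau))(\gamma'(\tau))\,d\tau\bigr\rangle$. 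Since this holds for all $v^*\in V^*$, the Hahn-Banach theorem delivers
\[
(f\circ\gamma)(t)-(f\circ\gamma)(0)=\int_0^t(f\circ\gamma)'(\tau)\,d\tau,
\]
which completes the argument.
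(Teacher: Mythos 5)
Your proposal is correct and follows essentially the same route as the paper: almost-everywhere differentiability of $f\circ\gamma$ via the chain rule applied at points where the $1$-Lipschitz curve $\gamma$ is differentiable, an $L^1$ bound on the derivative from the boundedness of $Df$ on the compact image of $\gamma$, and the integral identity obtained by scalarizing with $v^*\in V^*$, applying the scalar fundamental theorem of calculus, commuting $v^*$ with the Bochner integral, and concluding by Hahn--Banach. The only cosmetic differences are that you establish measurability of the derivative by composition rather than by difference quotients, and you make the Lipschitz estimate for $f\circ\gamma$ (hence absolute continuity) explicit up front.
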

\begin{proof}
Since $\gamma :[0,\ell (\gamma )]\to \Omega$ is a rectifiable curve
parametrized by arc length, in particular it is $1$-Lipschitz, so it is
differentiable almost everywhere. Furthermore, the derivative $\gamma'(\tau)$
has Euclidean norm $|\gamma '(\tau)|=1$ whenever it exists.
Additionally $f\in C^1(\Omega ,V)$, so the chain rule yields that $f\circ\gamma$
is differentiable almost everywhere. Now denote $h=f\circ\gamma $. Since
$$h^\prime (t)=\lim_{n\to\infty}\frac{h(t+1/n)-h(t)}{1/n}$$
we see that  $h^\prime$ is limit of a sequence of measurable functions, and
hence measurable. Furthermore, as $f\in C^1(\Omega ,V)$ and
$\gamma ([0,\ell(\gamma)])$ is compact, there exists $K>0$ such that
$| \nabla f (\gamma(\tau))|\leq K$ for all $\tau \in [0,\ell(\gamma)])$.
Then
\begin{align*}
\|h'\|_1
&=\int_0^{\ell(\gamma)}\|(\nabla f(\gamma(\tau)))\cdot\gamma'(\tau)\| \, d\tau
=\int_0^{\ell(\gamma)} \left\|\sum_{i=1}^N
\frac{\partial f(\gamma(\tau))}{\partial x_i} \cdot \gamma_i'(\tau)\right\| \, d\tau \\
&\leq \int_0^{\ell(\gamma)}\sum_{i=1}^N
    \left\| \frac{\partial f(\gamma(\tau))}{\partial x_i} \right\| \cdot
    \vert \gamma_i'(\tau) \vert \, d\tau
\leq \int_0^{\ell(\gamma)}\vert \nabla f(\gamma(\tau))\vert \cdot
    \vert \gamma'(\tau)\vert \, d\tau \leq K\ell(\gamma),
\end{align*}
concluding that $h^\prime\in L^1([0,\ell (\gamma )],V)$.
Now for each $v^*\in V^*$, applying the Fundamental Theorem of Calculus to the scalar function $\langle v^*, h\rangle$ we see that for each $t\in [0,\ell (\gamma)]$ we have that
$$\langle v^*,h\rangle (t)-\langle v^*,h\rangle (0)=\int_0^t \langle v^*,h^\prime (\tau)\rangle \, d\tau= \left\langle v^*, \int_0^t h'(\tau) \,  d\tau \right\rangle.$$
As a consequence, $h(t)-h(0)=\displaystyle\int_0^t h^\prime (\tau) \,  d\tau$ for every $t\in [0,\ell (\gamma)]$.
\end{proof}

\begin{theorem}\label{acc}
Let $1\leq p <\infty$, let $\Omega$ be an open subset of $\mathbb{R}^N$ and let
$V$ be a Banach space. Then every $f\in W^{1,p}(\Omega ,V)$ admits a
representative which is absolutely continuous and differentiable almost
everywhere over $p$-almost every rectifiable curve $\gamma$ in $\Omega$.
\end{theorem}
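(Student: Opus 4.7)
The plan is to emulate the classical ACL-characterization strategy, adapted to Banach-valued targets. The three key inputs are the Meyers-Serrin theorem quoted after the definition of $W^{1,p}(\Omega,V)$, which allows approximation by functions in $C^1(\Omega,V)\cap W^{1,p}(\Omega,V)$; Lemma \ref{lemma C1}, which gives the fundamental theorem of calculus for smooth $V$-valued functions along rectifiable curves; and Fuglede's lemma (Lemma \ref{fuglede}), which converts $L^p$-convergence on $\Omega$ into $L^1$-convergence along $p$-almost every curve.

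First I would fix a sequence $\{f_n\}\subset C^1(\Omega,V)\cap W^{1,p}(\Omega,V)$ with $\|f_n-f\|_{W^{1,p}}\to 0$. Applying Fuglede's lemma successively to the scalar sequences $\|f_n-f\|\to 0$ and $\|\partial f_n/\partial x_i-\partial f/\partial x_i\|\to 0$ in $L^p(\Omega)$, for $i=1,\dots,N$, a diagonal subsequence extraction (still called $\{f_n\}$) produces a single family $\Gamma_0\subset\mathcal{M}$ of zero $p$-modulus such that for every $\gamma\notin\Gamma_0$,
$$\int_\gamma\|f_n-f\|\,ds\to 0,\qquad \int_\gamma\Big\|\frac{\partial f_n}{\partial x_i}-\frac{\partial f}{\partial x_i}\Big\|\,ds\to 0\quad(1\leq i\leq N).$$
Enlarging $\Gamma_0$ via Lemma \ref{Gamma+}, I may further assume $\mathcal L^1(\gamma^{-1}(N_f))=0$ for every $\gamma\notin\Gamma_0$, where $N_f\subset\Omega$ is the $\mathcal L^N$-null set of non-Lebesgue points of $f$ (the Lebesgue differentiation theorem for Bochner-integrable functions holds for any Banach space $V$, via Pettis essential separability reduced to a countable dense subset of the range). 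I then take $\tilde f$ to be the \emph{precise representative}, defined at each Lebesgue point as the limit of ball averages and set to $0$ elsewhere; this $\tilde f$ coincides with $f$ almost everywhere.

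For $\gamma\notin\Gamma_0$, Lemma \ref{lemma C1} yields, for every $s,t\in[0,\ell(\gamma)]$,
$$f_n(\gamma(t))-f_n(\gamma(s))=\int_s^t\sum_{i=1}^N\frac{\partial f_n}{\partial x_i}(\gamma(\tau))\,\gamma_i'(\tau)\,d\tau.$$
Since $|\gamma_i'(\tau)|\leq 1$, the $L^1$-convergence of the integrands along $\gamma$ forces the right-hand side to converge uniformly in $s,t$ to $\int_s^t g_\gamma(\tau)\,d\tau$, where $g_\gamma(\tau):=\sum_i(\partial f/\partial x_i)(\gamma(\tau))\,\gamma_i'(\tau)\in L^1([0,\ell(\gamma)],V)$. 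After a further extraction, $f_n\circ\gamma\to \tilde f\circ\gamma$ pointwise $\mathcal L^1$-a.e.\ on $[0,\ell(\gamma)]$. Passing to the limit yields $\tilde f(\gamma(t))-\tilde f(\gamma(s))=\int_s^t g_\gamma(\tau)\,d\tau$ on a full-measure set of pairs; continuity of the right-hand side then lets me uniquely extend $\tilde f\circ\gamma$ to an absolutely continuous function on $[0,\ell(\gamma)]$. Differentiability a.e., with derivative $g_\gamma(\tau)$, follows at once from the Lebesgue differentiation theorem for Bochner integrals, valid for arbitrary $V$.

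The main obstacle is the globalization of this curve-by-curve construction: distinct rectifiable curves meeting at a common point must be assigned the same value. Using the precise representative circumvents this, since $\tilde f$ is a well-defined pointwise map outside an $\mathcal L^N$-null set that is avoided by $p$-almost every curve thanks to Lemma \ref{Gamma+}. A subsidiary delicate point is justifying the pointwise convergence $f_n(\gamma(\tau))\to \tilde f(\gamma(\tau))$ for a.e.\ $\tau$, which needs to be chained carefully from Fuglede's lemma (providing $L^1$-convergence along $\gamma$), the continuity of each $f_n$, and the fact that $\tilde f$ is the limit of ball averages on the full-measure set of Lebesgue points traversed by $\gamma$.
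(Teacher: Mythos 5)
Your overall strategy (Meyers--Serrin approximation, Fuglede along $p$-almost every curve, Lemma \ref{lemma C1} for the fundamental theorem of calculus on each $f_n\circ\gamma$, then passage to the limit) is the same as the paper's, and most of the individual steps are sound. But there is a genuine gap at the globalization step, and the device you propose to close it --- the precise representative $\tilde f$ together with Lemma \ref{Gamma+} --- does not actually close it. Lemma \ref{Gamma+} only discards the curves that spend \emph{positive length} in the null set $N_f$ of non-Lebesgue points; it says nothing about curves that merely \emph{pass through} $N_f$ at isolated parameters, and that family can easily have positive $p$-modulus (a null set such as a hyperplane is met by every transversal line). So for a typical good curve $\gamma$ there may be parameters $t_0$ with $\gamma(t_0)\in N_f$, where you have set $\tilde f(\gamma(t_0))=0$ by fiat; your identity $\tilde f(\gamma(t))-\tilde f(\gamma(s))=\int_s^t g_\gamma$ is only established for $s,t$ in a full-measure set avoiding such points, and nothing forces the assigned value $0$ to agree with $\lim_{s\to t_0}\tilde f(\gamma(s))$. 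Consequently $\tilde f\circ\gamma$ need not even be continuous, let alone absolutely continuous. Your suggestion to ``uniquely extend $\tilde f\circ\gamma$ to an absolutely continuous function'' is a curve-by-curve redefinition of $\tilde f$, which is exactly the consistency problem you set out to avoid. (One could rescue the precise representative via capacity theory --- the non-Lebesgue set of a $W^{1,p}$ function has zero $p$-capacity, hence is avoided entirely by $p$-almost every curve --- but that machinery is not in your argument and would itself need a Banach-valued justification.)

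The paper closes this gap differently, with an ingredient your proposal omits entirely: an equicontinuity argument. Using the Fuglede convergence $\int_\gamma|\nabla f_n-\nabla f|\,ds\to 0$ together with absolute continuity of the integral, one shows that the family $\{|\nabla f_n\circ\gamma|\}_n$ is equiintegrable on $[0,\ell(\gamma)]$, hence by the identity of Lemma \ref{lemma C1} the sequence $\{f_n\circ\gamma\}_n$ is equicontinuous; since it converges on a dense set of parameters (from a.e.\ convergence of $f_n$ on $\Omega$ plus Lemma \ref{Gamma+}), it converges \emph{uniformly} on $[0,\ell(\gamma)]$. This means $\lim_n f_n(x)$ exists at \emph{every} point of every good curve, so the representative defined as this pointwise limit (and $0$ where the limit fails to exist) is globally consistent, and the FTC identity passes to the limit for \emph{all} pairs $s,t$, yielding absolute continuity and a.e.\ differentiability directly. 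You need either this equicontinuity/uniform-convergence step or a capacity-type argument; as written, your proof does not establish that the chosen representative is absolutely continuous along the curves in question.
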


\begin{proof}
Let $\mathcal{M}$ denote the family of all nonconstant rectifiable curves in
$\Omega$ which, without loss of generality,  we can assume to be parametrized
by arc length. By the Meyers-Serrin density theorem, there exists a sequence
$\{ f_n\}_{n=1}^\infty$ of functions in $C^1(\Omega ,V)$ converging to $f$ in
$W^{1,p}(\Omega ,V)$-norm. In particular,  $f_n$ converges to $f$ in
$L^p(\Omega, V)$, and then there exists a subsequence of $\{ f_n\}_{n=1}^\infty$,
still denoted by $f_n$, converging almost everywhere to $f$. Choose a null
subset $\Omega_0\subset\Omega$ such that $f_n\to f$ pointwise on
$\Omega\setminus\Omega_0$. Now consider
$$
\Gamma_{\Omega_0}^+:=\{ \gamma:[0,\ell(\gamma)]\to \Omega \,\in \mathcal{M}:
    \mathcal{L}^1(\{ t\in [0,\ell (\gamma )]:\gamma (t)\in \Omega_0\})> 0\}.
$$
By Lemma \ref{Gamma+}, $\mathrm{Mod}_p(\Gamma_{\Omega_0}^+)=0$. In addition, for
every curve $\gamma\in\mathcal{M}\setminus \Gamma_{\Omega_0}^+$ the set
$E:= \{ t\in [0,\ell (\gamma )]:\gamma (t) \in\Omega _0\}$ has zero measure,
and therefore $f_n\circ\gamma\to f\circ\gamma$ almost everywhere on
$[0,\ell(\gamma)]$.

On the other hand, as $f_n\to f$ in $W^{1,p}(\Omega, V)$, we also have that
$|\nabla f_{n}-\nabla f |\to 0$ in $L^p(\Omega )$. Then we can apply Fuglede's
Lemma \ref{fuglede} and we obtain a subsequence of $\{ f_n\}_{n=1}^\infty$,
that we keep denoting by $f_n$, such that
\begin{equation}\label{eqfuglede}
\lim_{n\to\infty} \int_\gamma |\nabla f_n - \nabla f| \, ds = 0
\end{equation}
for every curve $\gamma\in\mathcal{M}\setminus \Gamma_1$, where
$\mathrm{Mod}_p(\Gamma_1)=0$. Notice that for every curve
$\gamma \in\mathcal{M}\setminus \Gamma_1$ the Fuglede identity
\eqref{eqfuglede} will also hold for any subcurve of $\gamma$, since
$$
\int_{\gamma|_{[s,t]}}|\nabla f_n-\nabla f| \, ds
\leq
\int_\gamma |\nabla f_n-\nabla f| \, ds
$$
for each $0\leq s\leq t\leq \ell (\gamma )$.

Furthermore, by Lemma \ref{inftyint}, the family of curves $\Gamma_2$ satisfying
that $\int_\gamma |\nabla f|ds =\infty$ or $\int_\gamma |\nabla f_n|ds =\infty$
for some $n$ has null $p$-modulus. Finally, we consider the family $\Gamma =\Gamma_1\cup\Gamma_2\cup\Gamma_{\Omega_0}^+$ and note that, by subadditivity,
$\mathrm{Mod}_p(\Gamma )=0$.

Now fix a rectifiable curve $\gamma\in\mathcal{M}\setminus\Gamma$. For each
$n\in\mathbb{N}$ by Lemma \ref{lemma C1} the function $f_{n}\circ\gamma$ is
almost everywhere differentiable, its derivative
$g_n=(f_{n}\circ\gamma)' = (\nabla f_n\circ\gamma )\cdot\gamma^\prime$ belongs
to $L^1([0,\ell (\gamma)], V)$ and satisfies
\begin{equation}\label{eqlemac1}
f_n\circ\gamma (t)-f_n\circ\gamma (s)=\int_s^t g_n \, d\mathcal{L}^1
\end{equation}
for each $s,t\in [0,\ell (\gamma )]$. Moreover, taking into account that
$\gamma$ is pa\-ra\-me\-trized by arc-length, we see that $|\gamma'|=1$ almost
everywhere on $[0,\ell (\gamma )]$, and we obtain that, for every function
$u\in W^{1,p}(\Omega ,V)$,
\begin{align*}
\|(\nabla u\circ\gamma )\cdot\gamma^\prime  \|
&=
\left\|\sum_{i=1}^N \left(\frac{\partial u}{\partial x_i}\circ\gamma\right)\cdot \gamma^\prime _i\right\|
\leq \sum_{i=1}^N \left\|\left(\frac{\partial u}{\partial x_i}\circ\gamma\right)\cdot \gamma^\prime _i\right\|\\
&=  \sum_{i=1}^N \left\| \frac{\partial u}{\partial x_i}\circ\gamma \right\| \cdot | \gamma^\prime _i |
\leq |\nabla u \circ\gamma| \cdot |\gamma'| = |\nabla u \circ\gamma|.
\end{align*}
Then for any $0\leq s\leq t\leq \ell (\gamma )$ we have that
\begin{align*}
\left\|\int_s^t g_n\, d\mathcal{L}^1 -\int_s^t(\nabla f\circ\gamma)\cdot\gamma'\, d\mathcal{L}^1\right\|
&\leq \int_s^t \| g_n -(\nabla f\circ\gamma )\cdot\gamma^\prime\| \, d\mathcal{L}^1
\\
&=\int_s^t \left \| (\nabla f_n\circ\gamma -\nabla f \circ\gamma )\cdot \gamma' \right\|  d\mathcal{L}^1 \\
&\leq \int_s^t |\nabla f_n-\nabla f|\circ\gamma \, d\mathcal{L}^1\\
&\leq\int_\gamma |\nabla f_n-\nabla f| \, ds\overset{n\to\infty}{\longrightarrow}0.
\end{align*}
Hence $(\nabla f\circ\gamma )\cdot\gamma^\prime\in L^1([0,\ell (\gamma )], V)$ and
\begin{equation}\label{convderivadas}
\lim_{n\to\infty }\int_s^t g_{n} \, d\mathcal{L}^1 =\int_s^t (\nabla f\circ\gamma )\cdot\gamma^\prime \, d\mathcal{L}^1.
\end{equation}
Next we are going to see that  the sequence $\{ f_n\circ\gamma\}_{n=1}^\infty $
is equicontinuous. This will follow from the fact  that
$\{ |\nabla f_n \circ \gamma | \}_{n=1}^\infty$ is equiintegrable, that is, for
every $\varepsilon >0$ there exists $\delta >0$ such that
$$
\sup_{n\geq 1}\int_A |\nabla f_n \circ\gamma| \, d\mathcal{L}^1\leq
\varepsilon\text{ if }A\subset [0,\ell (\gamma )] \text{ and }\mathcal{L}^1(A)<\delta.$$
Fix $\varepsilon >0$. Then by \eqref{eqfuglede} there exists $n_0\in \mathbb{N}$
such that
\begin{equation}\label{estequiintegrable1}
\int_0^{\ell (\gamma )}  | |\nabla f_n \circ\gamma |-|\nabla f\circ\gamma| |
 \, d\mathcal{L}^1 <\frac{\varepsilon}{2}\quad\forall n\geq n_0
\end{equation}
Now notice that as $\gamma\notin\Gamma_2$ then $|\nabla f_n \circ\gamma |$ and
$|\nabla f\circ\gamma|$ are integrable on $[0,\ell (\gamma )]$,  hence by the
absolutely continuity of the integral  we can choose a $\delta >0$ such that for
any $A\subset [0,\ell (\gamma )]$ with $\mathcal{L}^1(A)<\delta$
\begin{equation}\label{estequiintegrable2}
\int_A |\nabla f_n \circ\gamma | \, d\mathcal{L}^1<\frac{\varepsilon}{2},
\end{equation}
for all $n\in\{1,\ldots,n_0\}$ and
\begin{equation}\label{estequiintegrable3}
\int_A |\nabla f\circ\gamma| \,  d\mathcal{L}^1<\frac{\varepsilon}{2}.
\end{equation}
Then for $n\geq n_0$ by \eqref{estequiintegrable1} and \eqref{estequiintegrable3}
\begin{align*}
\int_A |\nabla f_n \circ \gamma|\, d\mathcal{L}^1
&\leq  \int_A |\nabla f \circ\gamma|\,  d\mathcal{L}^1 + \int_0^{\ell (\gamma )}
\left|
|\nabla f_n \circ\gamma| -|\nabla f \circ\gamma |
\right| \, d\mathcal{L}^1\\
&<\frac{\varepsilon}{2}+\frac{\varepsilon}{2}=\varepsilon.
\end{align*}
This, together with \eqref{estequiintegrable2}, gives that
$$
\int_A |\nabla f_n \circ\gamma | \, d\mathcal{L}^1<\varepsilon
$$
for every
$n\in\mathbb{N}$, as we wanted to prove. Hence by \eqref{eqlemac1} we have that,
if $0\leq s \leq t\leq \ell (\gamma )$ are such that $|s-t|<\delta$, then
$$
\| f_{n}\circ\gamma (s)-f_{n}\circ\gamma (t)\| \leq
\int_s^t |\nabla f_n \circ\gamma| \, d\mathcal{L}^1 <\varepsilon.
$$
This yields that $\{ f_n\circ\gamma \}_{n=1}^\infty$ is an equicontinuous
sequence. Since in addition $\{ f_n\circ\gamma \}_{n=1}^\infty$  converges on a
dense subset of $[0,\ell (\gamma )]$ we obtain that, in fact,
$\{f_n\circ\gamma\}_{n=1}^\infty$ converges uniformly on $[0,\ell(\gamma)]$.

Now we choose a representative of $f$ defined as follows:
\begin{equation*}
f(x)  :=
\begin{cases}
  \, \lim_{n \to \infty} f_n(x) & \text{if the limit exists},\\
   \, 0  &\text{otherwise}.
\end{cases}
\end{equation*}
With this definition we obtain that, for every curve
$\gamma\in\mathcal{M}\setminus\Gamma$ and every $t \in [0,\ell (\gamma )]$, the
sequence $\{(f_n\circ\gamma)(t))\}_{n=1}^\infty$ converges to $f\circ\gamma(t)$.
Therefore, using \eqref{eqlemac1} and \eqref{convderivadas} we see that, for
every $s, t \in [0,\ell (\gamma )]$,
\begin{align*}
(f\circ\gamma)(t)-(f\circ\gamma)(s)
&=\lim_{n\to\infty} ((f_{n}\circ\gamma)(t)-(f_{n}\circ\gamma)(s))\\
&=\lim_{n\to\infty }\int_s^t g_{n} \, d\mathcal{L}^1 =
\int_s^t (\nabla f\circ\gamma) \cdot\gamma' \, d\mathcal{L}^1.
\end{align*}
From here we deduce that  $f\circ\gamma$ is absolutely continuous and almost
everywhere differentiable on $[0,\ell (\gamma )]$.
\end{proof}

\section{Sobolev-Reshetnyak spaces $R^{1,p}(\Omega, V)$}

A different notion of Sobolev spaces was introduced by Reshetnyak in
\cite{Reshetnyak} for functions defined in an open subset of $\mathbb R^N$ and
taking values in a metric space. Here we will consider only the case of
functions with values in a Banach space. These Sobolev-Reshetnyak spaces have
been considered in \cite{HKST-paper} and \cite{H-T}. We give a definition taken
from  \cite{H-T}, which is slightly different, but equivalent, to the original
definition in \cite{Reshetnyak}.

\begin{definition}
Let $\Omega$ be an open subset of $\mathbb{R}^N$ and let $V$ be a Banach space.
Given $1\leq p < \infty$, the Sobolev-Reshetnyak space $R^{1,p}(\Omega, V)$ is
defined as the space of all classes of functions $f\in L^p(\Omega, V)$ satisfying
\begin{enumerate}
\item for every $v^*\in V^*$ such that $\|v^*\|\leq 1$,
$\langle v^*,f\rangle\in W^{1,p}(\Omega )$;
\item\label{cond2R1p} there is a nonnegative function $g\in L^p(\Omega)$ such
that the inequality $|\nabla\langle v^*, f\rangle|\leq g$ holds almost
everywhere, for all $v^*\in V^*$ satisfying $\| v^*\| \leq 1$.
\end{enumerate}
We now define the norm
$$\| f\| _{R^{1,p}}:=\| f\| _p+\inf_{g\in \mathcal{R}(f)} \| g\| _p,$$
where $\mathcal{R}(f)$ denotes the family of all nonnegative functions
$g\in L^p(\Omega )$ satisfying \eqref{cond2R1p}.
\end{definition}

It can be checked that the space $R^{1,p}(\Omega, V)$, endowed with the norm
$\| \cdot \| _{R^{1,p}}$, is a Banach space. We also note the following.

\begin{remark}\label{remark}
Let $\Omega\subset \mathbb{R}^N$ be an open set and let $V$ be a Banach space.
If $f:\Omega\to V$ is Lipschitz and has bounded support, then
$f\in R^{1,p}(\Omega ,V)$ for each $p\geq 1$.
\end{remark}

As we have mentioned, our main goal in this note is to compare Sobolev and
Sobolev-Reshetnyak spaces. We first give a general result:

\begin{theorem}\label{thm subset}
Let $\Omega$ be an open subset of $\mathbb{R}^N$ and let $V$ be a Banach space.
For $1\leq p<\infty$,  the space $W^{1,p}(\Omega, V)$ is a closed subspace of
$R^{1,p}(\Omega, V)$ and furthermore, for every $f\in W^{1,p}(\Omega, V)$, we
have
$$
\| f\|_{R^{1,p}}\leq \| f\|_{W^{1,p}} \leq \sqrt{N} \, \| f\|_{R^{1,p}}.
$$
\end{theorem}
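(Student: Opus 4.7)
The idea is to prove a pointwise norm inequality between $|\nabla f|$ and any admissible $g\in\mathcal{R}(f)$, from which both the inclusion and the norm estimates follow; closedness is then a direct consequence.

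First I would show $W^{1,p}(\Omega,V)\subset R^{1,p}(\Omega,V)$ and the inequality $\|f\|_{R^{1,p}}\leq\|f\|_{W^{1,p}}$. Given $f\in W^{1,p}(\Omega,V)$ and $v^*\in V^*$ with $\|v^*\|\leq 1$, the scalar function $\langle v^*, f\rangle$ lies in $L^p(\Omega)$ since $|\langle v^*,f\rangle|\leq\|f\|$. Applying $v^*$ to the defining identity of the Bochner weak derivative and using that $\langle v^*, \int_\Omega\cdot\rangle=\int_\Omega\langle v^*,\cdot\rangle$ for Bochner integrable functions, one checks that $\langle v^*,\partial f/\partial x_i\rangle$ is the classical weak partial derivative of $\langle v^*,f\rangle$. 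Consequently
$$\Bigl|\frac{\partial\langle v^*,f\rangle}{\partial x_i}\Bigr|\leq\Bigl\|\frac{\partial f}{\partial x_i}\Bigr\|\quad\text{a.e.},$$
so $|\nabla\langle v^*,f\rangle|\leq|\nabla f|$ a.e. Thus $g:=|\nabla f|\in L^p(\Omega)$ witnesses the admissibility, and in particular $\inf_{g\in\mathcal{R}(f)}\|g\|_p\leq\||\nabla f|\|_p$, which gives $\|f\|_{R^{1,p}}\leq\|f\|_{W^{1,p}}$.

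Next I would prove the reverse estimate $\|f\|_{W^{1,p}}\leq\sqrt{N}\,\|f\|_{R^{1,p}}$. Fix an admissible $g\in\mathcal{R}(f)$. The key point is to upgrade the family of pointwise inequalities $|\langle v^*,\partial f/\partial x_i\rangle|\leq g$ (each holding outside a $v^*$-dependent null set) to $\|\partial f/\partial x_i\|\leq g$ a.e. The main obstacle is thus the non-countability of the unit ball of $V^*$. To overcome it, I would use that $\partial f/\partial x_i$, being measurable, is essentially separably valued by the Pettis theorem; then there exists a countable set $D_i\subset\{v^*\in V^*:\|v^*\|\leq 1\}$ which norms the closed separable subspace containing the essential range, so
$$\Bigl\|\frac{\partial f}{\partial x_i}(x)\Bigr\|=\sup_{v^*\in D_i}\bigl|\langle v^*,\partial f/\partial x_i\rangle(x)\bigr|\quad\text{for a.e. }x\in\Omega.$$
The countable union of null sets is null, so $\|\partial f/\partial x_i\|\leq g$ a.e., and hence $|\nabla f|\leq\sqrt{N}\,g$ a.e. Taking $L^p$-norms and then the infimum over $g\in\mathcal{R}(f)$ yields $\||\nabla f|\|_p\leq\sqrt{N}\inf_{g\in\mathcal{R}(f)}\|g\|_p$, and combining with $\|f\|_p\leq\sqrt{N}\|f\|_p$ gives $\|f\|_{W^{1,p}}\leq\sqrt{N}\,\|f\|_{R^{1,p}}$.

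Finally, closedness is a soft consequence of the equivalence of norms on $W^{1,p}(\Omega,V)$. If $\{f_n\}\subset W^{1,p}(\Omega,V)$ converges to $f$ in $R^{1,p}(\Omega,V)$, then $f_n-f_m\in W^{1,p}(\Omega,V)$ and the just-established inequality gives $\|f_n-f_m\|_{W^{1,p}}\leq\sqrt{N}\,\|f_n-f_m\|_{R^{1,p}}\to 0$. Thus $\{f_n\}$ is Cauchy in $W^{1,p}(\Omega,V)$ and converges there to some $\tilde{f}\in W^{1,p}(\Omega,V)$. Since both $f$ and $\tilde f$ are the $L^p$-limit of $\{f_n\}$, they coincide, and $f\in W^{1,p}(\Omega,V)$.
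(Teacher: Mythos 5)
Your proof is correct, and for the crucial second inequality it takes a genuinely different route from the paper. The first part (the inclusion and $\|f\|_{R^{1,p}}\leq\|f\|_{W^{1,p}}$) is the same computation in both cases --- the paper simply cites Proposition 2.3 of Haj{\l}asz--Tyson, while you carry out the scalarization $\langle v^*,\partial f/\partial x_i\rangle = \partial\langle v^*,f\rangle/\partial x_i$ explicitly --- and the closedness argument is identical. For $\|f\|_{W^{1,p}}\leq\sqrt{N}\,\|f\|_{R^{1,p}}$, however, the paper invokes the heavy machinery it has already built: from Theorem \ref{acc} and Lemma \ref{cubes} it extracts, for each unit vector $w$, the almost-everywhere existence of the \emph{strong} directional derivative $D_wf(x)\in V$, and then bounds $\|D_wf(x)\|=\sup_{\|v^*\|\leq1}|D_w\langle v^*,f\rangle(x)|$ by $g(x)$. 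You instead work directly with the weak partial derivatives and confront head-on the real obstruction --- that the bound $|\langle v^*,\partial f/\partial x_i\rangle|\leq g$ holds off a $v^*$-dependent null set while the dual unit ball is uncountable --- resolving it via the Pettis theorem (essential separable-valuedness of $\partial f/\partial x_i$) and a countable norming set $D_i$ obtained from Hahn--Banach. Your argument is shorter and entirely self-contained: it needs no curve-modulus machinery, no Meyers--Serrin density, and no a.e.\ differentiability along curves. Indeed, your explicit treatment of the countable-norming-set reduction is arguably cleaner than the paper's, whose supremum over the full unit ball of $V^*$ quietly relies on the pointwise existence of $D_wf(x)$ as an element of $V$ (and could itself be tightened by exactly the separability device you use). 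What the paper's approach buys in exchange is the geometric information that strong directional derivatives of $W^{1,p}$-functions exist a.e.\ in every direction, which it reuses elsewhere.
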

\begin{proof}
That $W^{1,p}(\Omega, V) \subset R^{1,p}(\Omega, V)$ and
$\|f\|_{R^{1,p}}\leq \|f\|_{W^{1,p}}$ for all $f\in W^{1,p}(\Omega, V)$ was
proved in Proposition 2.3 of \cite{H-T}.

Now we will show the opposite inequality. Consider $f\in W^{1,p}(\Omega ,V)$,
let $g \in \mathcal{R}(f)$, and choose a vector $w\in \mathbb R^N$ with $| w | =1$.
Taking into account Theorem \ref{acc} and Lemma \ref{cubes} we see that, for
almost all $x\in \Omega$,  there exists the directional derivative
$$
D_w f(x)=\lim_{t\to 0} \frac{1}{t} (f(x+t w)-f(x)) \in V.
$$
For each $v^*\in V^*$ with $\| v^*\| \leq 1$ we then have, for almost al
 $x\in \Omega$,
$$
| D_w \langle v^*, f\rangle (x) | = | \nabla \langle v^*,
f\rangle (x) \cdot w | \leq | \nabla \langle v^*, f\rangle (x) |
\leq g(x).
$$
Thus, again for almost all $x\in \Omega$,
$$
\| D_w f(x) \| = \sup_{\| v^* \|\leq 1} | \langle v^*, D_w f(x)\rangle |
= \sup_{\| v^* \|\leq 1} | D_w \langle v^*, f\rangle (x) | \leq g(x).
$$
In this way we see that the weak partial derivatives of $f$ are such that
$\|(\partial f/\partial x_i) (x)\| \leq g(x)$ for
every $i\in\{1,\ldots,N\}$ and almost all $x\in \Omega$. From here, the desired
inequality follows.
Finally, from the equivalence of the norms on $W^{1,p}(\Omega ,V)$ we see that
it is a closed subspace.
\end{proof}

However, the following simple example shows that the opposite inclusion does
not hold in general.

\begin{example}\label{example}
Consider the interval $I=(0, 1)$ and let $f:I\to \ell^\infty$ be
the function given by
$$f(t)=\left\{ \frac{\sin (nt)}{n} \right\}_{n=1}^\infty $$
for all $t\in I$.
Then $f\in R^{1,p}(I, \ell^\infty)$ but $f\notin W^{1,p}(I, \ell^\infty)$.
\end{example}
\begin{proof}
Since $f$ is Lipschitz, we see from Remark \ref{remark} that
$f\in R^{1,p}(I, \ell^\infty)$ for all $1\leq p <\infty$. Suppose now that
$f\in W^{1,p}(I, \ell^\infty)$. From Theorem \ref{acc} we have that $f$ is
almost everywhere differentiable on $p$-almost every rectifiable curve in $I$.
Since, by Lemma \ref{inftyint},  the family formed by a single nontrivial
segment $[a, b] \subset I$ has positive $p$-modulus, we obtain that $f$ is
almost everywhere differentiable on $I$. But this is a contradiction, since in
fact $f$ is nowhere differentiable. Indeed, for each $t \in I$, the limit
$$
\lim_{h\to 0} \frac{1}{h}(f(t+h) - f(t))
$$
does not exist in $\ell^\infty$. This can be seen taking into account that
$f(I)$ is contained in the space $c_0$ of null sequences, which is a closed
subspace of $\ell^\infty$, while the coordinatewise limit is
$\{\cos (nt)\}_{n=1}^\infty$, which does not belong to $c_0$.
\end{proof}

Before going further, we give the following result, which parallels Theorem
\ref{acc}, and whose proof is based on Theorem 7.1.20 of \cite{HKST-book}.

\begin{theorem}\label{Racc}
Let $\Omega\subset\mathbb{R}^N$ be an open set, let $V$ be a Banach space and
suppose $1\leq p<\infty$. Then, every $f\in R^{1,p}(\Omega, V)$ admits a
representative such that, for $p$-almost every rectifiable curve $\gamma$ in
$\Omega$, the composition $f\circ\gamma$ is absolutely continuous.
\end{theorem}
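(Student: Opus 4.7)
The plan is to reduce the claim to the upper-gradient characterization of Newtonian functions, using the identification $R^{1,p}(\Omega,V)=N^{1,p}(\Omega,V)$ provided by Theorem 7.1.20 of \cite{HKST-book}. Given $f\in R^{1,p}(\Omega,V)$, pick any function $g\in\mathcal{R}(f)$ witnessing condition~(2) in the definition of $R^{1,p}$. The content of the HKST identification is that the same $g$ serves, up to modification of $f$ on a null set, as a $V$-valued upper gradient: there exists a representative $\tilde f$ of $f$ such that for $p$-almost every rectifiable curve $\gamma$ in $\Omega$ and every $0\le a\le b\le\ell(\gamma)$ one has
$$
\bigl\|\tilde f(\gamma(b))-\tilde f(\gamma(a))\bigr\|_V \le \int_{\gamma|_{[a,b]}} g\, ds.
$$

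From this inequality absolute continuity of $\tilde f\circ\gamma$ follows immediately on an exceptional family of zero $p$-modulus. Indeed, by Lemma \ref{inftyint} the family of curves $\gamma$ for which $\int_\gamma g\, ds=\infty$ has zero $p$-modulus, so for $p$-almost every $\gamma$ the scalar function $g\circ\gamma$ belongs to $L^1([0,\ell(\gamma)])$ and thus $t\mapsto\int_0^t g(\gamma(\tau))\,d\tau$ is absolutely continuous. Applied to any finite collection of pairwise disjoint subintervals $[a_i,b_i]\subset[0,\ell(\gamma)]$, the upper-gradient inequality yields
$$
\sum_i \bigl\|\tilde f(\gamma(b_i))-\tilde f(\gamma(a_i))\bigr\|_V \le \sum_i \int_{a_i}^{b_i} g(\gamma(\tau))\, d\tau,
$$
and the right-hand side can be made arbitrarily small by the absolute continuity of the integral of $g\circ\gamma$, giving the required $\varepsilon$-$\delta$ condition.

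The main obstacle is of course the first step: producing, out of the scalar-valued information encoded in $R^{1,p}$, a single vector-valued representative $\tilde f$ for which the $V$-valued upper-gradient inequality holds pointwise on $p$-almost every curve. The scalar theory (Theorem \ref{acc} applied to each $\langle v^*,f\rangle$ with $\|v^*\|\le 1$) only supplies, for every fixed $v^*$, an AC representative $h_{v^*}$ of $\langle v^*,f\rangle$ satisfying $|h_{v^*}(\gamma(b))-h_{v^*}(\gamma(a))|\le\int_{\gamma|_{[a,b]}} g\,ds$ off an exceptional family depending on $v^*$, and $B_{V^*}$ is typically non-separable. This obstacle is overcome in HKST by exploiting Pettis measurability: $f$ is essentially separable-valued, say with values in a closed separable subspace $V_0\subset V$, and there is a countable sequence $\{v_n^*\}\subset B_{V^*}$ which is norming for $V_0$ (obtained by Hahn--Banach from a dense sequence in $V_0$). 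Running the scalar argument simultaneously for the countable family $\{\langle v_n^*,f\rangle\}_n$ uses only a countable union of exceptional curve families, which still has zero $p$-modulus by subadditivity, and combining this with Lemma \ref{Gamma+} to discard curves that spend positive time outside the almost-everywhere defined representative allows one to take suprema in $n$ and pass from the scalar inequalities to the $V$-valued inequality above. Once $\tilde f$ is thus constructed, the previous paragraph yields the theorem.
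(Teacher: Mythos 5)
Your proposal is correct and follows essentially the same route as the paper: the paper's proof is itself a self-contained rendering of the Theorem 7.1.20 argument from \cite{HKST-book} that you cite and then sketch (essential separability via Pettis, a countable norming family from Hahn--Banach, Theorem \ref{acc} applied to the countably many scalarizations, subadditivity of the $p$-modulus together with Lemmas \ref{inftyint} and \ref{Gamma+}, a supremum over the countable family to get the $V$-valued upper-gradient inequality, and finally absolute continuity from the integrability of $g$ along $p$-almost every curve). The only content you defer to the citation that the paper works out explicitly is the definition of the representative on the exceptional null set via limits along curves.
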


\begin{proof}
Consider $f\in R^{1,p}(\Omega ,V)$. In particular, $f$ is measurable,  hence
there exists a null set $E_0\subset\Omega$ such that $f(\Omega\setminus E_0)$
is a separable subset of $V$. Then we can choose a countable set
$\{ v_i\}_{i=1}^\infty\subset V$ whose closure in $V$ contains the set
$$
f(\Omega\setminus E_0)-f(\Omega\setminus E_0)=
\{ f(x)-f(y):x,y\in\Omega\setminus E_0\}\subset V.
$$
Additionally, we can apply the Hahn-Banach theorem to select a countable set
$\{ v_i^*\}_{i=1}^\infty\subset V^*$ such that
$\langle v_i^*,v_i\rangle =\| v_i\|$ and $\| v_i^*\| =1$ for each
$i\in\mathbb{N}$. As before, let $\mathcal{M}$ denote the family of all
nonconstant rectifiable curves in $\Omega$. From Theorem \ref{acc} we obtain
that, for each $i\in\mathbb{N}$, there is a representative $f_i$ of
$\langle v^*_i, f\rangle$ in $W^{1,p}(\Omega)$ such that $f_i$ is absolutely
continuous on $p$-almost every curve $\gamma \in \mathcal{M}$.
Let $E_i$ denote the set where $f_i$ differs
from $\langle v^*_i, f\rangle$, and define $\Omega_0= \bigcup_{i} E_i \cup E_0$,
which is also a null set.
Now let $g\in \mathcal{R}(f)$ and define
$$g^*(x):=\sup_i|\nabla \langle v_i^*, f(x)\rangle |$$
We may also assume that $g$ and $g^*$ are Borel functions and $g^*(x)\leq g(x)$
for each $x\in\Omega$. In particular, $g^*\in L^p(\Omega )$.  For a curve
$\gamma :[a, b] \to \Omega$ in $\mathcal{M}$, consider the following properties:
\begin{enumerate}
\item the function $g^*$ is integrable on $\gamma $;
\item the length of $\gamma$ in $\Omega_0$ is zero, tat is, $\mathcal{L}^1(\{ t\in [a, b]:\gamma (t) \in \Omega_0\})=0$;
\item for each $i\in\mathbb{N}$ and every $a\leq s \leq t \leq b$,
$$
|f_i(\gamma (t))- f_i(\gamma(s) )|
\leq \int_s^t |\nabla \langle v_i^*, f\rangle (\gamma(\tau)) | \, d\tau
\leq \int_{\gamma |_{[s,t]}} g^* \,  ds.
$$
\end{enumerate}
By Lemma \ref{inftyint} and Lemma \ref{Gamma+}, respectively,  we have that
properties (1) and (2) are satisfied by $p$-almost every curve
$\gamma \in \mathcal{M}$. From Theorem \ref{acc} we obtain that property
(3) is also satisfied by $p$-almost every curve $\gamma \in \mathcal{M}$.
Thus the family $\Gamma$ of all  curves $\gamma \in \mathcal{M}$ satisfying
simultaneously (1), (2) and (3) represents $p$-almost every nonconstant
rectifiable curve on $\Omega$.
Now we distinguish two cases.

First, suppose that $\gamma:[a, b] \to \Omega$ is a curve in $\Gamma$ whose
endpoints satisfy $\gamma (a),\gamma (b)\notin \Omega_0$. Hence we can choose a
subsequence $\{ v_{i_j}\}_{j=1}^\infty$ converging to
$f(\gamma (b))-f(\gamma (a))$, and then
\begin{align*}
\| f(\gamma (b))-f(\gamma (a))\|
&=  \lim_{j\to\infty}\| v_{i_j}\|
 = \lim_{j\to\infty}|\langle v_{i_j}^*,v_{i_j}\rangle |\\
&\leq  \limsup_{j\to\infty}\Big( |\langle v_{i_j}^*,v_{i_j}-f(\gamma (a))+f(\gamma (b)) \rangle |+|\langle v_{i_j}^*, f(\gamma (a))-f(\gamma (b))\rangle |\Big)\\
&\leq  \limsup_{j\to\infty} \Big( \| v_{i_j}-f(\gamma (a))+f(\gamma (b))\| +|\langle v_{i_j}^*,f(\gamma (a))\rangle  -\langle v_{i_j}^*,f(\gamma (b))\rangle |\Big)\\
& = |\langle v_{i_j}^*,f(\gamma (a))\rangle  -\langle v_{i_j}^*,f(\gamma (b))\rangle |\\
&= |f_{i_j}(\gamma (a)) - f_{i_j}(\gamma (b))|
\leq\int_\gamma g^* \, ds.
\end{align*}
Suppose now that $\gamma:[a, b] \to \Omega$ is a curve in $\Gamma$ with at least one
endpoint in $\Omega_0$. In fact, we can suppose that $\gamma (a)\in \Omega_0$.
By property (2), we can choose  a sequence $\{ t_k\}_{k=1}^\infty\subset [a, b]$
converging to $a$ and such that $\gamma (t_k)\notin \Omega_0$. Then by the
previous case
$$
\| f(\gamma (t_k))-f(\gamma (t_l)) \| \leq \int_{\gamma |_{[t_k,t_l]}}g^* \, ds
$$
for any $k,l\in\mathbb{N}$, and hence, as $g^*$ is integrable on $\gamma$, then
$\{ f(\gamma (t_k))\}_{k=1}^\infty$ is convergent. Suppose now that
$\sigma :[c, d] \to \Omega$ is another curve in $\Gamma$ satisfying
$\sigma (c)=\gamma (a)$, and let $\{s_m\}_{m=1}^\infty\subset [c, d]$ be a
sequence converging to $a$ such that $\sigma(s_m)\not\in \Omega_0$ for every
$m\in\mathbb{N}$. Then
$$
\| f(\gamma (t_k))-f(\sigma (s_m))\| \leq
\int_{\sigma|_{[c,s_m]}}g^*ds +\int_{\gamma |_{[a,t_k]}} g^* \, ds
\overset{k,m\to\infty}{\longrightarrow} 0.
$$
This proves that the limit of $f(\gamma(t_k))$ as $k\to\infty$
is independent of the curve $\gamma$ and the sequence $\{ t_k\}_{k=1}^\infty$.
Now we choose a representative $f_0$ of $f$ defined in the following way:
\begin{enumerate}
\item If $x\in\Omega\setminus \Omega_0$ we set $f_0(x)=f(x)$.
\item If $x\in \Omega_0$ and there exists $\gamma :[a, b]\to \Omega$ in
$\Gamma$ such that $\gamma(a)=x$,  we set
$f_0(x)=\lim_{k\to\infty}f(\gamma(t_k))$ where
$\{t_k\}_{k=1}^\infty\subset [a, b]$ is a sequence converging to $a$ such that
$\gamma(t_k)\notin \Omega_0$ for each $k$.
\item Otherwise, we set $f_0(x)=0$.
\end{enumerate}
By definition, $f_0=f$ almost everywhere and, for every
$\gamma:[a, b] \to \Omega$ in $\Gamma$,
$$
\| f_0(\gamma (b))-f_0(\gamma (a))\|\leq
\int_\gamma g^* \, ds \leq \int_\gamma g \, ds.
$$
Furthermore, as this also holds for any subcurve of $\gamma$ by the definition
of $\Gamma$,  we also have that for every $a\leq s \leq t \leq b$
\begin{equation}\label{acbound}
\| f_0\circ \gamma (t)-f_0\circ\gamma (s)\| \leq \int_{\gamma |_{[s,t]}}g \,ds.
\end{equation}
Therefore, the integrability of $g$ on $\gamma$ gives that $f\circ\gamma$ is
absolutely continuous.
\end{proof}

Note that in the previous theorem, in contrast with Theorem \ref{acc}, for
$p$-almost every curve $\gamma$ the composition $f\circ\gamma$ is absolutely
continuous but, in general, it needs not be differentiable almost everywhere
unless the space $V$ satisfies the Radon-Nikodým Property. Recall that a Banach
space $V$ has the \emph{Radon-Nikodým Property} if  every Lipschitz
function $f:[a,b]\to V$ is differentiable almost everywhere.
Equivalently (see e.g. Theorem 5.21 of \cite{Benyamini}) $V$ has the
Radon-Nikodým Property if and only if every absolutely continuous function
$f:[a,b]\to V$ is differentiable almost everywhere. The name of this
property is due to the fact that it characterizes the validity of classical
Radon-Nikodým theorem in the case of Banach-valued measures. We refer to
\cite{D-U} for an extensive information about the Radon-Nikodým Property on
Banach spaces.

We are now ready to give our main result:

\begin{theorem}\label{main thm}
Let $\Omega$ be an open subset of $\mathbb{R}^N$, let $V$ be a Banach space and
$1\leq p<\infty$. Then $W^{1,p}(\Omega, V)=R^{1,p}(\Omega, V)$ if, and only if,
the space $V$ has the Radon-Nikodým property.
\end{theorem}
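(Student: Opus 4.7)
The inclusion $W^{1,p}(\Omega, V) \subset R^{1,p}(\Omega, V)$ is already given by Theorem \ref{thm subset}, so the equivalence reduces to the reverse inclusion. For the direction where $V$ has the Radon-Nikodým property, I would start with $f \in R^{1,p}(\Omega, V)$ and apply Theorem \ref{Racc} to obtain a representative such that $f \circ \gamma$ is absolutely continuous for $p$-almost every rectifiable curve $\gamma$. Fixing a coordinate direction $e_i$ and using Lemma \ref{cubes} together with subadditivity over segments with rational endpoints, the set of $y \in e_i^\perp$ for which $f$ fails to be absolutely continuous on the line $\{y + te_i\} \cap \Omega$ must have $(N-1)$-dimensional Lebesgue measure zero. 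The Radon-Nikodým hypothesis, via the characterization recalled just after Theorem \ref{Racc}, then upgrades this absolute continuity to almost everywhere Fréchet differentiability along each such line, so classical pointwise partial derivatives $\partial f/\partial x_i(x)$ exist for almost every $x\in \Omega$.

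To check that these pointwise partials are in fact the weak partial derivatives of $f$, I would verify three things. First, Pettis measurability: for each $v^* \in V^*$ one has $\langle v^*, \partial f/\partial x_i\rangle = \partial \langle v^*, f\rangle/\partial x_i$ almost everywhere, with the right-hand side being the ordinary weak partial of $\langle v^*, f\rangle \in W^{1,p}(\Omega)$, while essential separable-valuedness is inherited from $f$ because the pointwise derivative is an almost everywhere limit of difference quotients $\tfrac{1}{h}(f(\cdot + he_i) - f(\cdot))$. Second, the $L^p$-bound $\|\partial f/\partial x_i(x)\| \leq g(x)$ for any $g \in \mathcal{R}(f)$ follows from the supremum argument in the proof of Theorem \ref{thm subset}, applied now to the genuine pointwise directional derivative. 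Third, the integration-by-parts identity against test functions $\varphi \in C_0^\infty(\Omega)$ is obtained by Fubini, reducing to the fundamental theorem of calculus for Bochner integrals on those lines parallel to $e_i$ along which $f$ is absolutely continuous.

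For the converse, I would fix a Lipschitz function $h:[a,b] \to V$ and aim to conclude almost everywhere differentiability. Place a translated copy of $[a,b]$ inside $\Omega$ as a segment parallel to $e_1$, extend $h$ to a Lipschitz function $\bar h: \mathbb{R}\to V$ by setting it constant outside $[a,b]$, and define $\tilde h(x_1,\ldots,x_N) := \psi(x_2,\ldots,x_N)\, \bar h(x_1)$ for some smooth scalar cutoff $\psi$ supported in a small neighborhood of the base point of the segment inside $e_1^\perp$. Then $\tilde h$ is Lipschitz with bounded support, so Remark \ref{remark} places it in $R^{1,p}(\Omega,V)$; by hypothesis $\tilde h \in W^{1,p}(\Omega, V)$, and Theorem \ref{acc} combined with Lemma \ref{cubes} produces a line $L$ in direction $e_1$ on which $\psi \neq 0$ and $\tilde h$ is differentiable almost everywhere, from which $h$ inherits almost everywhere differentiability on $[a,b]$, giving the Radon-Nikodým property. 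The step I expect to be most delicate is the Pettis measurability verification in the first implication, since the pointwise partial derivative is only defined almost everywhere and one must carefully combine the scalar weak-derivative identity with a genuine $V$-valued essential separability argument; the Fubini-based integration by parts is then routine once ACL-type control along lines parallel to the axes is in hand.
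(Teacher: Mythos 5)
Your proof follows essentially the same route as the paper's: the forward direction uses Theorem \ref{Racc}, the Radon--Nikod\'ym property to upgrade absolute continuity to differentiability along lines, Lemma \ref{cubes} with Fubini to produce pointwise directional derivatives almost everywhere, and then the same three verifications (measurability, the bound by $g\in\mathcal{R}(f)$, and integration by parts against test functions) to identify them as the weak partials; the converse is the paper's construction of a Lipschitz function of $x_1$ alone, cut off to lie in $R^{1,p}(\Omega,V)\setminus W^{1,p}(\Omega,V)$ via Remark \ref{remark}, Theorem \ref{acc} and Lemma \ref{cubes}. The only detail to tidy is that your $\tilde h(x)=\psi(x_2,\ldots,x_N)\,\bar h(x_1)$, with $\bar h$ extended by (generally nonzero) constants outside $[a,b]$, need not have bounded support when $\Omega$ is unbounded; first subtracting from $h$ the affine interpolant of its endpoint values (which does not affect almost everywhere differentiability), or cutting off in the $x_1$-variable as well, repairs this.
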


\begin{proof}
Suppose first that $V$ has the Radon-Nikodým Property. Consider
$f\in R^{1,p}(\Omega, V)$ and let $g\in \mathcal{R}(f)$. Fix a direction $e_i$
parallel to the $x_i$-axis for any $i\in\{1,\ldots,N\}$. From Theorem \ref{Racc}
we obtain a suitable representative of $f$ such that, over $p$-almost every
segment parallel to some $e_i$,  $f$ is absolutely continuous and, because of
the Radon-Nikodým Property, almost everywhere differentiable. Therefore, by
Lemma \ref{cubes} and Fubini Theorem we have that, for almost every $x\in\Omega$
and every $i\in\{1,\ldots,N\}$, there exists the directional derivative
$$
D_{e_i}f(x) = \lim_{h\to 0} \frac{f(x+h e_i)-f(x)}{h}.
$$
Note that each $D_{e_i}f$ is measurable, and that from Equation \eqref{acbound}
above it follows that $\| D_{e_i}f(x) \| \leq g(x)$ for almost every
$x\in\Omega$. Thus $D_{e_i}f \in L^p(\Omega, V)$ for each $i\in\{1,\ldots,N\}$.
In addition, for every $v^*\in V^*$ we have that $\langle v^*, D_{e_i}f\rangle$
is the weak derivative $\langle v^*, f\rangle$. Then for every
$\varphi\in C_0^\infty (\Omega )$
$$
\left\langle v^*, \int_\Omega  \varphi \, D_{e_i}f \right\rangle
= \int_\Omega \varphi \langle v^*, D_{e_i}f \rangle
= -\!\int_\Omega \frac{\partial \varphi}{\partial x_i} \langle v^*, f \rangle
= \left\langle
v^*,-\!\int_\Omega \frac{\partial \varphi}{\partial x_i}\,f
\right\rangle.
$$
Thus for every $i\in\{1,\ldots,N\}$ the directional derivative $D_{e_i}f$ is, in
fact, the $i$-th weak derivative of $f$, that is,
$\partial f/\partial x_i=D_{e_i}f\in L^p(\Omega ,V)$. It follows that
$f\in W^{1,p}(\Omega ,V)$.

For the converse, suppose that $V$ does not have the Radon-Nikodým Property.
Then there exists a Lipschitz function $h:[a, b]\to V$ which is not
differentiable almost everywhere. We may also assume that  $[a,b]\times R_0=R$
is an $N$-dimensional rectangle contained in  $\Omega$, where $R_0$ is an
$(N-1)$-dimensional rectangle. The function $f :[a,b]\times R_0 \to V$ given by
$f(x_1, x_2, \ldots,x_N)= h(x_1)$ is Lipschitz, so it admits an extension
$\tilde{f}: \Omega \to V$ which is Lipschitz and has bounded support. Then, as
noted in Remark \ref{remark},  we have that $\tilde{f} \in R^{1,p}(\Omega, V)$.
On the other hand, $\tilde{f}$ is not almost everywhere differentiable along any
horizontal segment contained in  $[a,b]\times R_0=R$. From Lemma \ref{cubes} and
Theorem \ref{acc}, we deduce that $\tilde{f} \notin W^{1,p}(\Omega, V)$.
\end{proof}

\end{document}